\DeclareMathAlphabet{\pazocal}{OMS}{zplm}{m}{n}
\numberwithin{equation}{section}
\numberwithin{equation}{subsection}
\theoremstyle{plain}
\newtheorem{theorem}[equation]{Theorem}
\newtheorem{lemma}[equation]{Lemma}
\newtheorem{proposition}[equation]{Proposition}
\newtheorem{corollary}[equation]{Corollary}
\theoremstyle{definition}
\newtheorem{example}[equation]{Example}
\newtheorem{remark}[equation]{Remark}
\newcommand{\bC}{{\mathbb C}}
\newcommand{\bR}{{\mathbb R}}
\newcommand{\bZ}{{\mathbb Z}}
\newcommand{\cV}{{\mathcal V}}
\newcommand{\cA}{{\mathcal A}}
\newcommand{\cW}{{\mathcal W}}
\newcommand{\cE}{{\mathcal E}}
\newcommand{\calS}{{\mathcal S}}
\newcommand{\cL}{{\mathcal L}}
\newcommand{\calL}{{\mathcal L}}
\newcommand{\C}{{\calc}}
\newcommand{\fI}{\mathfrak{I}}
\newcommand{\bt}{{\bf t}}
\newcommand{\calF}{{\mathcal F}}
\newcommand{\Z}{\mathbb{Z}}
\newcommand{\Q}{\mathbb{Q}}
\newcommand{\R}{\mathbb{R}}
\newcommand{\tx}{\tilde{X}}
\def\C{\mathbb C}
\def\Q{\mathbb Q}
\def\R{\mathbb R}
\def\Z{\mathbb Z}
\newcommand{\ra}{\rightarrow}
\newcommand{\tX}{\widetilde{X}}
\newcommand{\tC}{\tilde{C}}
\newcommand{\fh}{\mathfrak{h}}
\newcommand{\D}{\Delta}
\newcommand{\ev}{\varepsilon}
\newcommand{\cO}{{\mathcal O}}
\begin{document}

\author{L\'aszl\'o Koltai}
\address{HUN-REN Alfr\'ed R\'enyi Institute of Mathematics,
Re\'altanoda utca 13-15, H-1053, Budapest, Hungary
\newline
 \hspace*{4mm} ELTE - Faculty of Science, Dept. of Geometry, P\'azm\'any P\'eter s\'et\'any 1/A, 1117 Budapest, Hungary}
\email{koltai.laszlo@renyi.hu }

\author{Tam\'as L\'aszl\'o}
\address{Babe\c{s}-Bolyai University, Str. Mihail Kog\u{a}lniceanu nr. 1, 400084 Cluj-Napoca, Romania \newline
\hspace*{4mm} Simion Stoilow Institute of Mathematics of the Romanian Academy, Bucharest, Romania}
\email{tamas.laszlo@ubbcluj.ro}
\author{Andr\'as N\'emethi}
\address{HUN-REN Alfr\'ed R\'enyi Institute of Mathematics,
Re\'altanoda utca 13-15, H-1053, Budapest, Hungary \newline
 \hspace*{4mm} ELTE - Faculty of Science, Dept. of Geometry,
 P\'azm\'any P\'eter s\'et\'any 1/A, 1117 Budapest, Hungary \newline \hspace*{4mm}
  BBU - Babe\c{s}-Bolyai Univ., Str, M. Kog\u{a}lniceanu 1, 400084 Cluj-Napoca, Romania
   \newline \hspace*{4mm}
BCAM - Basque Center for Applied Math.,
Mazarredo, 14 E48009 Bilbao, Basque Country – Spain}
\email{nemethi.andras@renyi.hu }

\thanks{The authors are partially supported by NKFIH Grant ``\'Elvonal (Frontier)'' KKP 144148. The second author acknowledges the support of the 'J\'anos Bolyai Research Scholarship' of the Hungarian Academy of Sciences, and the support of the project “Singularities and Applications” - CF 132/31.07.2023 funded by the European Union - NextGenerationEU - through Romania’s National Recovery and Resilience Plan.}

\title{Multiplier ideals of normal surface singularities}

\subjclass[2010]{Primary: 14B05, 14Fxx, 32S05; Secondary: 32S10, 32S25}

\keywords{normal surface singularities, plane curve singularities, multiplier ideals, jumping numbers, jumping multiplicities, Hodge spectrum, mixed Hodge modules, canonical covering, weighted homogeneous singularities, splice quotient singularities }


\begin{abstract}
We study the multiplier ideals and the corresponding jumping numbers and multiplicities $\{m(c)\}_{c\in \R}$ in the following context: $(X,o)$ is a complex analytic normal surface singularity,
${\mathfrak a}\subset \cO_{X,o}$ is an ${\mathfrak m}_{X,o}$--primary ideal,
$\phi:\tX\to X$ is a log resolution of $\mathfrak{a}$ such that $\mathfrak{a}\cO_{\tX}=\cO_{\tX}(-F)$,
for some nonzero effective divisor $F$ supported on $\phi^{-1}(0)$.
A priori,  $\{m(c)\}_{c\in \R}$ depends on the Hilbert function associated with the resolution and $F$.

We prove  that $\{m(c)\}_{c>0}$ is combinatorially computable from $F$ and the resolution graph $\Gamma$
of $\phi$, and we provide several formulae. We also extend Budur's result (valid for
$(X,o)=(\C^2,0)$), which makes an identification of $\sum_{c\in[0,1]}m(c)t^c$ with a certain Hodge spectrum. In our general case  we  use Hodge spectrum with coefficients in a mixed Hodge module.

We show that $\{m(c)\}_{c\leq 0}$ usually depends on the analytic type of $(X,o)$. However, for some distinguished analytic types we determine it concretely. E.g., when $(X,o)$ is
weighted homogeneous (and $F$ is associated with the central vertex), we recover $\sum_cm(c)t^c$ from the Poincar\'e series of $(X,o)$.  Also, when $(X,o)$ is a splice quotient then we recover  $\sum_cm(c)t^c$
from the multivariable topological Poincar\'e (zeta) function of $\Gamma$.

\end{abstract}

\maketitle

\reversemarginpar

\section{Introduction}

Assume that $(X,o)$ is a complex analytic normal surface singularity, let $\cO_{X,o}$ be its
local algebra with maximal ideal ${\mathfrak m}_{X,o}$. Let us also fix an
${\mathfrak m}_{X,o}$--primary ideal ${\mathfrak a}\subset \cO_{X,o}$. Then there exists a log resolution
$\phi:\tX\to X$  and a nonzero
integral cycle $F$ supported on the exceptional curve $\phi^{-1}(0)=E$ such that ${\mathfrak a}\cO_{\tX}=\cO_{\tX}(-F)$. In fact, in our study we will start  with some log resolution 
$\phi$ and  such a nonzero antinef cycle $F$.

We denote by $Z_K$ the anticanonical cycle associated with $\phi$,  namely $-c_1(\Omega^2_{\tX})=-c_1(\cO_{\tX}(K_{\tX}))\in H^2(\tX,\Z)$,
identified with a rational cycle (via the adjunction relations).
We consider for any $c\in\R$ the cycle
$L(c):=\lfloor Z_K+cF\rfloor$. Then the tower of {\it multiplier ideals} is defined as
$\fI(c,F):=\phi_*\cO_{\tX}(-L(c))\subset \cO_{X,o}$. They measure the complexity of the singularity $(X,o)$ compatibly with $F$ (or with $\mathfrak{a}$). They were introduced (at least implicitly)
in \cite{[21],[25],[33],[34]} and in the present form in
\cite{Ein,Laz2}.
Since then they were intensively studied, see \cite{budur,Carrami,Carrami2,Galindo,Galindo2,Howald,Hyry,Jarvilehto,Montaner,Naie,Tucker} with all their connections with different aspects of singularities and singular map-germs. In fact, the definition and study
extends to any dimension, see e.g. \cite{Ein,Laz2,Pande} and the references therein.

It turns out that  for any $c\in \R$, the ideal $\fI(c,F)\subset \cO_{X,o}$
has finite codimension, $\fI(c,F)\subset \fI(c',F)$ for $c'< c$, and $\fI(c,F)=\cO_{X,o}$ for
$c\ll 0$.  The number $c$ is called {\it jumping number} with {\it jumping multiplicity} $m(c)$
 if $m(c):= \dim\,
\fI(c-\epsilon ,F)/\fI(c, F)\not=0$ for $0<\epsilon\ll 1$.
One verifies that in any compact interval there are only finitely many jumping numbers and they are all rational.

Compared with the earlier literature in the subject we make two major extensions:

\vspace{2mm}

\noindent {\bf (A)} \ We do not assume any restriction regarding the surface singularity $(X,o)$, e.g. we do not assume that the link is a rational homology sphere (i.e. the dual graph might have loops and the irreducible exceptional divisors might have higher genera).
Also, we do not restrict the analytic type of $(X,o)$: on the  supporting topological type the analytic type might vary (which in the non-rational or non-minimally elliptic cases
might have considerable effects). We emphasize that in general the classification
of such analytic types (or the description of the corresponding  moduli spaces)
is still an open problem. Under this generality the general analytic tools
(e.g. vanishing theorems) are more complex (and the zones what they do not cover
are wider).

\vspace{2mm}

\noindent {\bf (B)} \ In the earlier studies only the cases $c>0$ were treated.
Our study of the cases $c\leq 0$ opens  new  perspectives. If $(X,o)$ is smooth or log terminal then $m(c)=0$ for $c\leq 0$, however, in the general case (even for rational $(X,o)$) the negative zone appears as a  very challenging part.

In fact  (and this is one of the main messages  of the note),
we show that the study of  $\{m(c)\}_{c\in \R}$ (equivalently, 
the Poincar\'e series of multiplier ideals $\sum_{c\in\R}m(c)t^c$ introduced first in \cite{Galindo}) breaks into two major parts with rather different aspects.

\vspace{2mm}

{\bf (i)} \ {\bf (the $c>0$ zone)} \ We also call this the {\it Riemann--Roch zone},
since in this case  we determine $\sum_{c>0}m(c)t^c$ completely topologically  from the resolution graph $\Gamma$ of $\phi$ and the cycle $F$ (without any restriction regarding the link or the analytic type of $(X,o)$).

We provide several combinatorial formulae. The first one,  Proposition \ref{lem:chi},
is a generalization of
expressions obtained in \cite{Carrami2,Carrami,Tucker} proved in those articles for rational singularities.

We also compute $m(c+1)-m(c)$ (see Corollary \ref{cor:m0}), thus  we recover $\sum_{c>0}m(c)t^c$ from
$\sum_{c\in (0,1]}m(c)t^c$ and $(D_c,F)$, where $D_c=L(c)-L(c-\epsilon)$
($0<\epsilon\ll 1$) is the {\it jumping
divisor} (introduced in \cite{Carrami2}), computable combinatorially.
(This fact was also shown  for rational $(X,o)$ in  \cite{Carrami}.)

Hence, the first interval and
$\sum_{c\in (0,1]}m(c)t^c$ has a distinguished role.

Regarding this we stress out two facts. Firstly, Budur in \cite{budur}
identified $\sum_{c\in (0,1]}m(c)t^c$ with the Hodge spectrum ${\rm Sp}_{(0,1]}(f,t)$
of a function germ $f$, where in his case,  $(X,o)=(\C^2,0)$ is smooth, $f:(\C^2,0)\to
(\C,0)$ is a function germ with an isolated singularity, $\phi$ is a log embedded resolution and $F$ is the divisor ${\rm div}_{\phi,E}(f)$ of $\phi^*(f)$ along $\phi^{-1}(0)=E$. This was reproved in \cite{Carrami} as well.
(For the theory of mixed Hodge structures and of the spectrum associated with isolated hypersurface singularities see the work of
Steenbrink, e.g. \cite{steenbrink,steenbrink2} or \cite{SSS}.)

In this note we succeed to generalize this statement. The first step is that we rewrite
$J_{[0,1]}(t)=\sum_{c\in[0,1]}m(c)t^c$ in a different way, in terms of $\Gamma$ and the multiplicities of $F$, for any $(X,o)$ and any $F$ (cf.  Theorem \ref{th:main_mult}).
This is the second combinatorial formula of the jumping multiplicities.

This formula will resonate perfectly with the Hodge spectrum computations of a generic element $f\in\mathfrak{a}$. Indeed, if $(X,o)$ is numerically Gorenstein
(that is, $Z_K$ is an integral cycle),  and
$f$ is an analytic function gems $(X,o)\to (\C,0)$ with isolated singularities  and  ${\rm div}_{\phi,E}(f)=F$,
we prove the generalization of the statement of Budur:
$J_{[0,1]}(t)={\rm Sp}_{[0,1]}(f,t)$, cf. Corollary \ref{cor:NS}. However, when $\Gamma $ is not numerically Gorenstein,  $\{Z_K\}\not=0$, then we need to use a more  general spectrum formula
established in  \cite{NS}. In \cite{NS}  the `usual' spectrum is replaced by the spectrum of $f$ with coefficients in a mixed Hodge module ${\mathcal M}$
of $(X,o)$,
such that ${\mathcal M}$ restricted to $X\setminus \{o\}$ is a variation of Hodge structure whose underlying representation is abelian.
Indeed, under the assumption that the link is a rational homology sphere,
from $Z_K$ we construct a character of the first homology of the link, hence a
rank one variation of Hodge structure ${\mathbb V}$ on $X\setminus \{o\}$, and we prove the identity
$J_{[0,1]}(t)={\rm Sp}_{[0,1]}(f, j_* {\mathbb V},t)$ (where
$j:X\setminus \{o\}\subset X$ is the inclusion), see Theorem \ref{th:NS2}.
The construction of ${\mathbb V}$
can be done using the canonical cyclic covering of $(X,o)$ (associated with $Z_K$,
or in $\Q$--Gorenstein case associated with the canonical divisor  $K_{\tX}$, as well). We emphasize again, though a priori both
objects $J_{[0,1]}(t)$ and ${\rm Sp}_{[0,1]}(f, j_* {\mathbb V},t)$ are analytical, we provide combinatorial formulae for them. It would be interesting to compare these results with the general (Hodge) theoretical construction
of \cite{schnell} valid for complex manifolds, and with the Bernstein-Sato polynomial connection from \cite{ELSV}.

The second new aspect regarding  $\sum_{c\in (0,1]}m(c)t^c$ is the following connection.
Consider  again a function $f:(X,o)\to (\C,0)$ with an isolated singularity such that $F={\rm div}_{\phi,E}(f)$. Set $(C,o):=(f^{-1}(0),o)$ and let
 $\delta(C,o)$ be its  delta invariant.
Then  $\sum_{c\in (0,1]}m(c)=
\delta(C,o)+(\{Z_K\},F)$.

\vspace{2mm}

{\bf (ii)} \ {\bf (the $c\leq 0$ zone)} \
We show by examples that the multiplicity  $m(0)$ cannot be determined
topologically. However, we characterize  it  completely in terms of
the analytic type of $(X,o)$.  The analytic contribution is a term $\mathfrak{h}^0\in\{0,1\}$
characterized by an $h^0$--sheaf cohomology (see \ref{s:cpoz}).

The $c\leq 0$ zone can be called the
`$h^0$--zone', where the $h^1$--vanishing theorems (hence the Riemann--Roch expressions) 
 cannot be applied.
In this zone $m(c)$ usually cannot be computed form $\Gamma$ and $F$. We exemplify this fact
by concrete examples when we fix $\Gamma$ and $F$ and we consider several different analytic realizations, and we compute the $m(c)$'s. Interestingly enough, already the numerical invariant $\sum _{c\leq 0}m(c)$
 measures the analytic type and it is connected to the most important analytic invariant of $(X,o)$, the geometric genus $p_g(X,o)$. More precisely, we prove that
 $\sum _{c\leq 0}m(c)=p_g(X,o)+\chi(\lfloor Z_K\rfloor)$.
 Already when $(X,o)$ is rational (i.e. $p_g(X,o)=0$),  $\sum _{c\leq 0}m(c)$ can be arbitrarily large
 (see e.g. Example \ref{ex:sum}).

 On the other hand, even for  this zone, we provide concrete formulae valid for certain
 {\it distinguished analytic types}.

 \vspace{2mm}

 $\bullet$ \ In the case of {\it a weighted homogeneous germs}
 $(X,o)$  with local graded algebra
 $\oplus_{\ell\geq 0}R_\ell$
 (and $F$ is associated with the central vertex),
 we show the correspondence $\dim(R_\ell)=m(c) $ with $
 c=(\ell-\mathfrak{r})/\alpha$ (for notations and precise statement see subsection \ref{ss:we}). For $\ell>{\mathfrak r}$
 (corresponding to $c>0$) $\dim(R_\ell)$ is topological computable from the Seifert invariants of the link, however in general it is not.
 But, if the link is a rational homology sphere,  then by Dolgachev-Pinkham-Demazure formulae
 all the terms for any $\ell$ are topological. (For the theory of weighted homogeneous germs see e.g. \cite{pinkham}.)

 $\bullet$ \ In the next case we assume that
 the link is a rational homology sphere, and we fix a resolution graph $\Gamma$, and we assume that the analytic type of $(X,o)$ is {\it splice quotient} with respect to
 $\Gamma$ (a family introduced by Neumann and Wahl, cf. \cite{NW}).
 Then in Proposition \ref{PZ}  we provide a combinatorial formula
 for  $\sum_{c\in \R}m(c)t^c$ in terms of $\{L(c)\}_c$ and
 the multivariable topological Poincar\'e (zeta)
 function $Z(\bt)$  of $\Gamma$. This includes e.g. all the cases when $(X,o)$
 is rational or minimally elliptic (with $\Gamma$ minimal). (For more on $Z(\bt)$ see \cite{CDG,CDGEq,Nkonyv}.)

 But we emphasize again that in general, the computation of $\{m(c)\}_{c\leq 0}$
 is of analytic nature, hence it runs in parallel with the classification of the analytic types
 of $(X,o)$ supported of a fixed topological type identified by $\Gamma$. We will return back to the study
  of these analytic aspects in the forthcoming manuscript.

\section{Preliminaries}

\subsection{Notations regarding resolutions and resolution graphs}

\bekezdes
Let us fix a normal surface singularity $(X,o)$ and one of its log (good) resolutions  $\phi:\widetilde{X}\to X$.
Let  $\Gamma$ be the dual resolution graph of $\phi$, we denote its vertices by  $\mathcal{V}$ and its edges by $\mathcal{E}$.
  We write $E$ for the irreducible exceptional curve $\phi^{-1}(o)$, and let $\{E_i\}_{i\in\mathcal{V}}$ be its irreducible components.

The lattice $L:=H_2(\widetilde{X},\mathbb{Z})$ is a
free $\Z$--module generated by the classes of  $\{E_i\}_{i\in\mathcal{V}}$
and it is  endowed
with the natural  negative definite intersection form  $(\,,\,)$.
 The dual lattice is $L'={\rm Hom}_\Z(L,\Z) \simeq\{
l'\in L\otimes \Q\,:\, (l',L)\in\Z\}\subset L\otimes \Q$.
We use the same symbol $(\,,\,)$ for the extended intersection
form to $L\otimes \Q$. The overlattice
$L'$   is generated over $\Z$
by the (anti)dual classes $\{E^*_i\}_{i\in\mathcal{V}}$ defined
by $(E^{*}_{i},E_j)=-\delta_{ij}$ (where $\delta_{ij}$ stays for the  Kronecker symbol).
 $L'$ can also  identified with $H^2(\tX,\Z)$ (the target of
 the first Chern class $c_1:{\rm Pic}(\tX)\to H^2(\tX,\Z)$).

We denote by $M$ the link of $(X,o)$. It is known that $M$ is a rational homology sphere if and only if each $E_i$ is rational and $\Gamma$ is a tree.
It is an integral homology sphere if additionally $\det\,(\,,\,)=\pm 1$.
More generally, if $g_i$ denotes the genus of $E_i$, $g:= \sum_i g_i$,
 and $h$ is the first Betti number of the topological realization of $\Gamma$ (the number of independent
cycle in the graph), then the first Betti number of $M$ is $2g+h$, and
 $ {\rm Tors}\, H_1(M,\mathbb{Z})\simeq L'/L$.

There is a natural partial ordering of $L\otimes \mathbb{Q}$: we write $l_1'\geq l_2'$ if
$l_1'-l_2'=\sum _i a_iE_i$ with every  $a_i\geq 0$.
We set $L_{\geq 0}=\{l\in L\,:\, l\geq 0\}$ and
$L_{>0}=L_{\geq 0}\setminus \{0\}$.

We define the rational {\it Lipman cone} as $\calS_\Q:=\{l'\in L\otimes \Q\,:\, (l', E_i)\leq 0 \ \mbox{for all $i$}\}$, and we  also
set $\calS':=\calS_\Q\cap L'$ and  $\calS:=\calS_\Q\cap L$.
As a monoid $\calS'$  is generated over $\bZ_{\geq 0}$ by $\{E^*_i\}_i$.
 If $s'\in\calS_\Q\setminus \{0\}$, then
all its $E_i$--coordinates  are strictly positive (see \cite[Corollary 2.1.19]{Nkonyv}).

Let $K_{\tX}$ be the {\it canonical divisor } of $\tX$ (well--defined up to a linear equivalence), that is,  $\cO_{\tX}(K_{\tX})\simeq\Omega^2_{\tX}$, the sheaf of holomorphic 2--forms. We denote
its  first Chern class
$c_1(\cO_{\tX}(K_{\tX}))\in  H^2(\tX,\Z)\simeq L'$ by $-Z_K$; $Z_K$ is called
the  {\it (anti)canonical cycle} of $\phi$.
The (rational) cycle  $Z_K\in L'$ can also  be determined combinatorially  from
$(\,,\,)$ by the
{\it adjunction formulae}
$(Z_K, E_i)=(E_i,E_i)+2-2g_i$ for all $i\in\mathcal{V}$.

We call $(X,o)$ {\it numerically Gorenstein } if $Z_K\in L$, i.e. if all its $E_i$--coefficients are integral (this condition is independent of the choice of $\phi$).
We call $(X,o)$ {\it Gorenstein} if $Z_K\in L$ and one can choose $K_{\tX}$ as
$-Z_K$, i.e. $\cO_{\tX}(K_{\tX}+Z_K)\simeq \cO_{\tX}$. In a different language,
$(X,o)$ is numerically Gorenstein (resp. Gorenstein)
exactly when  the line bundle $\Omega^2_{X\setminus \{o\}}$ is topologically (resp. analytically) trivial.
(For a different characterization see also \ref{bek:qg}.)

Finally,  we set for any $l'\in L'$ the Riemann-Roch expression $\chi(l')=(Z_K-l',l')/2$.  If $l\in L_{> 0}$, then $\chi(l)=h^0(\cO_l)-h^1(\cO_l)$. E.g., $\chi(E)=1-g-h$.
As usual,  we also write $\chi(\cO_l(D))$ for $h^0(\cO_l(D))-h^1(\cO_l(D))$,
which,  by  Riemann-Roch, equals $\chi(l)+(D,l)$ ($D\in L$).

For any $\bR$-divisor $D=\sum_i d_i E_i$ of $\widetilde{X}$ ($d_i\in \bR$) we define its integral part
 $\lfloor D \rfloor$ and fractional part $\{ D\}$ by applying the corresponding operation to each  coefficient $d_i$.

For more details see e.g.  \cite{NOSz,Nkonyv}.
\bekezdes \label{bek:1.1.2}
Assume that  $(C,o)$  is a  reduced Weil divisor in $(X,o)$
(i.e. it is  an isolated  curve singularity in $(X,o)$).
Then we can consider a log embedded resolutions of the pair $(C,o)\subset (X,o)$, that is, we require that in the log resolution
the union of  $E$ with the strict transform $\tilde{C}$ of $(C,o)$ form a simple normal crossing divisor.
Moreover, we can consider the corresponding embedded resolution graph too. Usually, the strict  transform will be denoted by arrows on the graph.
Their index set will be denoted by $\cA$ corresponding to the
irreducible decomposition $\cup_{a\in\cA}C_a$ of $(C,o)$.

Once an embedded resolution and  the strict transform $\tC \subset \tX$ is fixed,  the embedded topological type of the pair $(C,o)\subset (X,o)$
is basically coded in the information that how many components of $\tC$ intersect each $E_i$, that is, in the intersection numbers $(\tC, E_i )_{\tX}$ . Then we define the rational cycle $l'_C\in \calS'\setminus \{0\} \subset  L'_{>0}$,  associated with $C$ and $\phi$,
via the linear system of equations
 $(l'_C,E_i) + (\tC,E_i)_{\tX} = 0$ for every
 $i\in\mathcal{V}$. (Hence, $l'_C +\tC$, as a rational divisor of $\tX$,  is numerically trivial.
 It is the total transform of $C$,  denoted by $\phi^*C$.)
If the arrow $a$  is supported by the vertex  $i(a)$, then  $l'_C=\sum_{a\in\cA} E_{i(a)}^*$.

If  $(C,o)$ is cut out by a germ $f:(X,o)\to (\mathbb{C},0)$ such that $(C,o)=(f^{-1}(0),o)$ is reduced (that is, $f$ defines an  isolated singularity), then we say that $(C,o)$  is
 a Cartier divisor of $(X,o)$.  In such a case $\phi^*C=
{\rm div}_{\tX}(\phi^*f)$.  Hence, in this case,   $l'_C$ is integral.
It will also be denoted by  ${\rm div}_{\phi,E}(f)$.

\bekezdes\label{bek:1}
Next, we fix a cycle $F\in \calS\setminus \{0\}$. Some  geometrical situations 
(motivated by the present note) how such a cycle  might  appear is listed below.

\vspace{1mm}

\noindent {\bf (A)} Let $\mathfrak{m}_{X,o}$ denote the maximal ideal of the local ring $\cO_{X,o}$ of $(X,o)$.
Let $\mathfrak{a}$ be an $ \mathfrak{m}_{X,o}$--primary   ideal of $\cO_{X,o}$. Then there exists
a log resolution $\phi$ such that $\mathfrak{a}\cdot \cO_{\widetilde{X}}=\cO_{\widetilde{X}}(-F)$ for some $F\in\calS\setminus \{0\}$.

\noindent {\bf (B)} Let $(C,o)\subset (X,o)$ be  a reduced  embedded curve germ, and $\phi$ an embedded good resolution. Then $l'_C\in\calS'\setminus \{0\}$, hence a convenient multiple of it  is in $\calS\setminus \{0\}$. Cf. also with Remark \ref{rem:multiple}.

\noindent {\bf (C)} Let $f:(X,o)\to (\mathbb{C},0)$ be a germ, then in any good embedded resolution $F:={\rm div}_{\phi,E}(f) \in \calS\setminus \{0\}$.

If $\mathfrak{a}$  is as in {\bf (A)}, and we fix a resolution $\phi$  with $\mathfrak{a}\cdot \cO_{\widetilde{X}}=\cO_{\widetilde{X}}(-F)$, then for any generic element
$f$ of  $\mathfrak{a}$ one has ${\rm div}_{\phi,E}(f)=F$.

\noindent {\bf (D)} Conversely, let us fix a semigroup element $F\in \calS\setminus \{0\}$, a divisor of a resolution of some analytic germ $(X,o)$.
If $(X,o)$ is rational then there exists a germ $f$ such that
${\rm div}_{\phi,E}(f)=F$, but for an arbitrary singularity $(X,o)$ this is not the case. However, if we fix the topological type of $(X,o)$, identified by $M$ or by some resolution graph
$\Gamma$, then there exists an analytic type, say $(X_F,o)$ supported on $M$ and a germ $f_F:(X_F,0)\to (\mathbb{C},0)$ such that ${\rm div}_{\phi_F,E_F}(f_F)=F$.

\noindent {\bf (E)} The correspondence $\mathfrak{a}\mapsto F$ given by
 $\mathfrak{a}\cdot \cO_{\widetilde{X}}= \cO_{\widetilde{X}}(-F)$ (via some good resolution)
is not injective. E.g., if we start with such an $\mathfrak{a}$ and $F$, then  $\phi_*\cO_{\widetilde{X}}(-F)$ is the integral closure $\bar{\mathfrak{a}}$ of
 $\mathfrak{a}$ in $\cO_{X,o}$, and  $\bar{\mathfrak{a}}\cdot \cO_{\widetilde{X}}=
 \mathfrak{a}\cdot \cO_{\widetilde{X}}=
 \cO_{\widetilde{X}}(-F)$
 too. Moreover, for any other ideal $\mathfrak{b}$ the identity
 $\bar{\mathfrak{a}}=\bar{\mathfrak{b}}$ is equivalent with
 $\mathfrak{b}\cdot \cO_{\widetilde{X}}=\mathfrak{a}\cdot \cO_{\widetilde{X}}$.

\subsection{The multiplier ideals}

\bekezdes {\bf The definition of multiplier ideals,
jumping numbers and jumping multiplicities.}
We fix a normal surface singularity $(X,o)$, a good resolution $\phi$, and a cycle $F\in \calS\setminus \{0\}$.

Following \cite{Ein,Laz2}
we define the {\it multiplier ideal}  associated with $F$ and a  real number
$c\in \bR$ as
$$\fI(c,F)= \phi_*\cO_{\widetilde{X}}(-\lfloor Z_K+cF \rfloor)\subset \cO_{X,o}.$$
For simplicity, we denote  the divisor $\lfloor Z_K + c F\rfloor$ by $L(c)$.
Though $Z_K$ and $F$ are combinatorial cycles  associated with the graph $\Gamma$, the ideals $ \fI(c,F)$ might depend essentially on the analytic structure of the germ $(X,o)$.
 Since $\phi:\tX\setminus E\ra X\setminus o$ is an isomorphism and $L(c)$ is supported on $E$,
we obtain  that $\fI(c,  F)= \cO_{X,o}$ whenever $L(c)\leq 0$,  and $\fI(c,F)\subset \cO_{X,o}$ is an $\mathfrak{m} _{X,o}$--primary ideal otherwise.
Also, $ \phi_*\cO_{\widetilde{X}}(-L(c) )= \phi_*\cO_{\widetilde{X}}(-\max\{0,L(c)\} )$ (see e.g. Lemma \ref{lem:PN}).
 As a corollary we get that
$$ \dim(\cO_{X,o}/\fI(c,F))< \infty $$
If $c\leq c'$, then
 $\fI(c ,F)  \supseteq \fI(c' ,F)$.
Whenever we have strict inclusion $$ \fI(c-\ev,F) \supsetneq \fI(c,F)$$
for arbitrarily small $\ev >0$, we say that $c$ is a {\it jumping number}, and its
{\it jumping multiplicity} is:
$$ m(c):=\dim(\fI(c-\ev,F)/\fI(c,F)).$$
Define  the jumping divisor corresponding to $c\in \bR$ (cf. \cite{Carrami2,Carrami,Tucker}) as
$$ D_c:=L(c)-L(c-\ev) \ \ \ (1\gg\ev>0). $$
Obviously, $D_c$ is a reduced divisor and it is `periodical': $D_{c+1}=D_c$.
In fact,  if $Z_K=\sum k_i E_i$ and $F=\sum m_i E_i$, then $E_i$ is in the support of  $D_c $ precisely when $k_i+cm_i\in \bZ$.
This shows that in any bounded  interval there are only finitely many values $c$ with $D_c\not=0$, hence with $m(c)>0$. Note that $D_c\not =0$ usually does not imply  $m(c)>0$ (see e.g. Example \ref{ex:rat}).
We also define
$$\D_c:= \{Z_K+cF\}.$$
Then
   any $E_i$  is either in the support of $D_c$ or of $\D_c$, but not in both.

Since $F\in \calS\setminus \{0\}$, hence all the coefficient of $F$ are strict positive,
if $c\ll0$ then $Z_K+cF\leq 0$,  and
$ \fI(c ,F)=\cO_{X,o}$. In particular, $m(c)=0$ for $c\ll 0$. Thus, in any  interval
of type $(-\infty, c_0]$ there are only finitely many jumping numbers $c$.

\begin{remark}\label{rem:multiple}
The assumption that $F$ is integral is just a convenient choice.
In all our arguments regarding the cycle $F$,
we can equally  start with any $F\in\calS_\Q\setminus\{0\} $, or,  we can replace the integral  $F$ by any $rF$, where $r\in\Q_{>0}$. Indeed,
the effect of the replacement  $F$ by $rF$  is
$m_{rF}(c)=m_{F}(rc)$.

\end{remark}

\section{Jumping multiplicities and the Hilbert function}

\subsection{Multiplier ideals and the Hilbert function}
Let us start with the following (known) lemma.
\begin{lemma}\label{lem:PN} Fix some $l\in L$ and write it as $l= P-N$, with
$P,N\in L_{\geq 0}$ in such a way  that in their support they have no common $E_i$--component. Then

(a) $H^0(\tX, \cO_{\tX}(-P))\to H^0(\tX, \cO_{\tX}(-l))$ is an isomorphism,

(b)  $h^1(\cO_{\tX}(-P))-h^1(\cO_{\tX}(-l))=\chi(N)-(N,l)$.
\end{lemma}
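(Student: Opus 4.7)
The plan is to deduce both (a) and (b) from the short exact sequence
$$0 \to \cO_{\tX}(-N) \to \cO_{\tX} \to \cO_N \to 0,$$
tensored with the line bundle $\cO_{\tX}(N-P) = \cO_{\tX}(-l)$, which yields
$$0 \to \cO_{\tX}(-P) \to \cO_{\tX}(-l) \to \cO_N(N-P) \to 0. \qquad (\star)$$
The inclusion $\cO_{\tX}(-P) \hookrightarrow \cO_{\tX}(-l)$ on the left is the natural one coming from $-P \leq -l$ (i.e.\ from $N \geq 0$), so this matches the map appearing in the statement.

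For part (a), I would argue directly at the level of sections, without appealing to $(\star)$. A germ in $H^0(\tX,\cO_{\tX}(-l))$ is (by properness of $\phi$ and the fact that $\phi$ is an isomorphism off $E$) a holomorphic function $f\in \cO_{X,o}$ whose pullback satisfies $\mathrm{ord}_{E_i}\phi^*f \geq l_i$ for every $i\in\cV$, where $l=\sum l_i E_i$. Since the supports of $P$ and $N$ are disjoint, $l_i=-N_i\leq 0$ whenever $E_i\subset\mathrm{supp}(N)$, and these constraints are automatic for holomorphic $f$; the only effective constraints are $\mathrm{ord}_{E_i}\phi^*f \geq P_i$ on $\mathrm{supp}(P)$. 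Hence $H^0(\tX,\cO_{\tX}(-l)) = H^0(\tX,\cO_{\tX}(-P))$, and the map in question is the identity on this common set of germs.

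For part (b), I take the long exact cohomology sequence of $(\star)$. Using (a), the connecting map gives
$$0 \to H^0(\cO_N(N-P)) \to H^1(\cO_{\tX}(-P)) \to H^1(\cO_{\tX}(-l)) \to H^1(\cO_N(N-P)) \to 0,$$
where I use that $H^2$ of any coherent sheaf on $\tX$ vanishes (since $X$ is Stein locally near $o$ and the fibers of $\phi$ are one-dimensional, so $R^{\geq 2}\phi_*=0$). Taking alternating dimensions,
$$h^1(\cO_{\tX}(-P)) - h^1(\cO_{\tX}(-l)) = \chi(\cO_N(N-P)).$$
Applying the Riemann--Roch formula recalled in the excerpt, $\chi(\cO_N(D)) = \chi(N)+(D,N)$ for $D\in L$, with $D = N-P$ gives
$$\chi(\cO_N(N-P)) = \chi(N) + (N-P,N) = \chi(N) - (P-N,N) = \chi(N) - (l,N),$$
which is the required identity by symmetry of the intersection form.

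There is no real obstacle here; the only point to double-check is the vanishing of $H^2(\tX,-)$ for coherent sheaves (standard for resolutions of Stein surface germs) so that the long exact sequence terminates, and the sign bookkeeping in the final Riemann--Roch computation, where the disjointness of $\mathrm{supp}(P)$ and $\mathrm{supp}(N)$ is not itself needed in (b) but is implicitly used to make the decomposition $l=P-N$ well defined.
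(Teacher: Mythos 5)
Your proof is correct, and it takes a slightly different route from the paper's. Both arguments start from the same short exact sequence $0 \to \cO_{\tX}(-P) \to \cO_{\tX}(-l) \to \cO_N(-l) \to 0$ (note $\cO_N(-l)=\cO_N(N-P)$), but they diverge after that. The paper establishes the single vanishing $H^0(N,\cO_N(-P+N))=0$ by checking $(-P,E_i)\le 0$ on $\mathrm{supp}(N)$ and invoking the $h^0$-vanishing theorem (dual to Grauert--Riemenschneider); this simultaneously yields (a), since the connecting map then forces surjectivity on $H^0$, and shortens the $H^1$-level exact sequence to three terms, from which (b) is immediate by Riemann--Roch. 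You instead prove (a) directly and elementarily, identifying $H^0(\tX,\cO_{\tX}(-l))$ with germs $f\in\cO_{X,o}$ satisfying $\mathrm{ord}_{E_i}\phi^*f\ge l_i$ (using normality of $X$ to extend over $o$) and observing that the constraints on $\mathrm{supp}(N)$ are vacuous; you then feed (a) into the long exact sequence to get a four-term exact sequence at the $H^0$--$H^1$ level, compute the Euler characteristic $\chi(\cO_N(N-P))$, and apply Riemann--Roch. The trade-off: your route avoids the $h^0$-vanishing theorem entirely (more elementary, self-contained), at the cost of carrying an extra term $h^0(\cO_N(N-P))$ through (b) rather than knowing it is zero; the paper's route is shorter once one accepts the cited vanishing and has the small bonus of establishing $H^0(\cO_N(N-P))=0$ as a byproduct. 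Your invocation of $H^2(\tX,-)=0$ for coherent sheaves is correct (fibers of $\phi$ are at most one-dimensional and $X$ is Stein). One small quibble: your closing remark that the disjointness of $\mathrm{supp}(P)$ and $\mathrm{supp}(N)$ ``is not itself needed in (b)'' is misleading, since your proof of (b) uses (a), and (a) does use the disjointness (it is what makes $P=\max\{0,l\}$, so that the $P$-constraints are exactly the nontrivial ones).
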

\begin{proof} Consider the short exact sequence of sheaves
$0\to \cO_{\tX}(-P)\to \cO_{\tX}(-l)\to \cO_N(-l)\to0$.

Then
 $(c_1(\cO_{\tX}(-P)),E_i)=(-P,E_i)\leq 0$ for all $E_i$ in the support of $N$.
Therefore  by the  $h^0$-vanishing theorem (see \cite[Theorem 6.4.2.]{Nkonyv}),
the dual of the Grauert-Riemenschneider vanishing,
 we obtain  $H^0(N,\cO_N(-P+N))=0$. Then use the
 long cohomological exact sequence.
\end{proof}
This shows that for any $l\in L$ we have
$H^0(\tX,\cO_{\tX}(-l))=H^0(\tX,\cO_{\tX}(-\max\{0,l\}))$, which  is canonically a subspace of
$H^0(\tX,\cO_{\tX})$.
The Hilbert function $\mathfrak{h}(l) $  (associated with the divisorial filtration of the resolution) is defined as
$$L\ni l\mapsto \mathfrak{h}(l):=\dim\, H^0(\tX,\cO_{\tX})/H^0(\tX,\cO_{\tX}(-\max\{0,l\}))=
\dim \, \cO_{X,o}/ \phi_* \cO_{\tX}(-l).$$

Note that
$\mathfrak{h}(L(c))=
\dim \cO_{X,o}/\fI(c,F)$, hence
\begin{equation}\label{eq:m}
m(c)=\mathfrak{h}(L(c))-\mathfrak{h}(L(c-\ev)).\end{equation}

\bekezdes {\bf Some general comments regarding the formula (\ref{eq:m}).}
Usually, under the assumption that we fix the  combinatorial data $\Gamma$,
the Hilbert function $\mathfrak{h}$ might depend
essentially on the choice of the analytic type of $(X,o)$ .
However, for any rational singularity, for any fixed resolution with graph $\Gamma$, $\mathfrak{h}(l) $ can be determined from $\Gamma$ for any $l\in L$. In fact, this is true for any
singularity  whose link is a rational homology sphere and its analytic type is {\it splice quotient}.
This family of normal surface singularities was introduced by Neumann and Wahl \cite{NW}
and it includes (among others) all rational,  minimally elliptic and weighted homogeneous singularities
(with $\Q HS^3$ link).
For such germs, $\mathfrak{h}(l)$ follows combinatorially  via the following two steps:
by \cite[(8.2.10)]{Nkonyv} $\mathfrak{h}(l)$ can be computed from the multivariable (analytic)
Poincar\'e series $P_0({\mathbf t})=\sum _l \mathfrak{p}(l)\bt^l$ of $(X,o)$ as
$\mathfrak{h}(l) =\sum_{\tilde{l}\not\geq l} \mathfrak{p}(\tilde{l})$
(this is true for any normal surface singularities), while
 by  \cite[Corollary 8.5.35]{Nkonyv} $P_0({\mathbf t})=Z_0({\mathbf t})$, where $Z_0({\mathbf t})$
is the topological multivariable Poincar\'e series (or zeta   function) associated with $\Gamma$, for its expression see Proposition below, (at this step the splice quotient assumption is crucial).
(For the definition and certain properties of analytic and topological multivariable
Poincar\'e series see also
\cite{CDG,CDGEq,CDGc}.)

In particular, in principle, for all splice quotient  singularities (and for any
$F\in \mathcal{S}\setminus \{0\}$) all the jumping multiplicities  $\{m(c)\}_{c\in \R}$ are  computable from $\Gamma$ and $F$.
Here is the precise  statement.

\begin{proposition}\label{PZ}
Assume that the link of $(X,o)$ is a rational homology sphere and $(X,o)$ is splice quotient
(associated with its resolution graph $\Gamma$). Consider free variables $\bt=\{t_i\}_{i\in\cV}$
indexed by the set of vertices, and for any $l'=\sum l'_iE_i\in L'$ write $\bt^{l'}=\prod_i t_i^{l'_i}$.
Let $Z(\bt)=\sum_{l'\in \calS'}\mathfrak{p}(l')\bt^{l'} $
be the  Taylor expansion at the origin of the rational function
    $$ \prod_{i\in\cV}(1-\bt^{E^*_i})^{\kappa_i-2},$$
    where $\kappa_i$ is the valency of the vertex $i$. Let $Z_0(\bt)$ be its integral part, namely
    $Z_0(\bt)=\sum _{l\in\calS} \mathfrak{p}(l)\bt^l\in \Z[\calS]$.
Then for any $c\in \R$ one has
$$m(c)=\sum_{l\geq {L(c)-D_c}, \ l\not\geq L(c)}\ \  \mathfrak{p}(l).$$

\end{proposition}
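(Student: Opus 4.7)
The plan is to derive the formula by chaining together two already-established ingredients: the identity between the analytic and topological multivariable Poincaré series valid in the splice quotient case, and the standard expression of the Hilbert function $\mathfrak{h}$ as a partial sum of the Poincaré-series coefficients. First I would recall from (\ref{eq:m}) that
\[
m(c) \;=\; \mathfrak{h}(L(c)) - \mathfrak{h}(L(c-\varepsilon)), \qquad 0<\varepsilon\ll 1,
\]
so the whole task reduces to computing the two Hilbert values and taking their difference.

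Next I would invoke \cite[(8.2.10)]{Nkonyv}, which for any normal surface singularity expresses the Hilbert function of the divisorial filtration in terms of the analytic Poincaré series $P_0(\bt)=\sum_l \mathfrak{p}^{an}(l)\bt^l$ as
\[
\mathfrak{h}(l) \;=\; \sum_{\tilde l \not\geq l}\mathfrak{p}^{an}(\tilde l).
\]
Under the splice quotient hypothesis, \cite[Corollary 8.5.35]{Nkonyv} gives the identification $P_0(\bt)=Z_0(\bt)$, so $\mathfrak{p}^{an}(l)=\mathfrak{p}(l)$ for every $l\in L$ and the same formula holds with the topological coefficients $\mathfrak{p}(l)$ coming from $Z_0(\bt)$. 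At this point no further analytic input is needed; the rest is bookkeeping inside $L$.

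To finish, I would observe that from the definition of the jumping divisor one has $L(c-\varepsilon) = L(c)-D_c$ for $0<\varepsilon\ll 1$, and since $F\in\calS\setminus\{0\}$ has strictly positive coefficients, $L(c-\varepsilon)\leq L(c)$. Thus $\{\tilde l\not\geq L(c)\}\supseteq\{\tilde l\not\geq L(c-\varepsilon)\}$, and the two partial sums for $\mathfrak{h}(L(c))$ and $\mathfrak{h}(L(c-\varepsilon))$ telescope to
\[
m(c)\;=\;\sum_{\tilde l\not\geq L(c),\ \tilde l\geq L(c)-D_c}\mathfrak{p}(\tilde l),
\]
which is exactly the claimed formula (noting that $\mathfrak{p}(l)=0$ for $l\notin \calS$, so the summation range agrees with the one in the statement).

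The only genuine substance is the identity $P_0(\bt)=Z_0(\bt)$, and this is precisely where the splice quotient hypothesis enters; everything else is formal. Hence there is no real obstacle in the proof itself — the delicate point is rather to verify that the citation hypotheses (rational homology sphere link plus splice quotient realization with respect to the chosen $\Gamma$) are exactly what is imposed in the statement, and to confirm that the inequalities $L(c-\varepsilon)\leq L(c)$ and $L(c-\varepsilon)=L(c)-D_c$ hold with the floor conventions in use so that the telescoping is clean.
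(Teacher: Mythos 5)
Your proof is correct and follows exactly the route the paper has in mind: the paper does not give a separate proof environment for Proposition~\ref{PZ}, but spells out the same two ingredients --- the formula $\mathfrak{h}(l)=\sum_{\tilde l\not\geq l}\mathfrak{p}(\tilde l)$ from \cite[(8.2.10)]{Nkonyv} and the splice quotient identity $P_0(\bt)=Z_0(\bt)$ from \cite[Corollary 8.5.35]{Nkonyv} --- in the paragraph immediately before the statement, leaving the telescoping via (\ref{eq:m}) and $L(c-\ev)=L(c)-D_c$ implicit. Your verification that $L(c-\ev)\le L(c)$ (so the index sets nest and the difference of the two partial sums is exactly the annulus $\{l\ge L(c)-D_c,\ l\not\ge L(c)\}$) and your remark that $\mathfrak{p}$ vanishes off $\calS$ are precisely the bookkeeping that makes the claimed formula come out, so the argument is complete.
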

However, we wish to emphasise, that in the non-rational splice quotient cases, if we keep the topological type (say $\Gamma$) and we modify the analytic structure of $(X,o)$ (into a non-splice quotient one) then
the Hilbert function and the jumping multiplicities might  change.

But, in the rational case (since any analytic type is splice quotient)
 independently of the analytic realizations of the topological type, the Hilbert function and $m(c)$ are  combinatorial expressible from $\Gamma$ and $F$ (via the above recipe).

We also observe  that in the general  splice quotient case the explicit computation via the above recipe can be rather technical and computational.

In this note, for $c\geq 0$ and for arbitrary $(X,o)$
we run  a completely different approach (inspired from mixed Hodge theory). In fact, for $c>0$ we will be able to determine
$m(c)$ {\it completely combinatorially} for any $(X,o)$ and $F$, independently of the analytic structure of $(X,o)$,
and we provide explicit formulae. The number $m(0)$ might depend on  the analytic structure, but   we determine  it completely.

Now, in fact,  we arrived to a main point of the discussion:
the behaviour of $m(c)$ is very different for $c>0$ and for $c\leq 0$.
As we already said, $m(c)$ for $c>0$ is topological, independent of the analytic structure of $(X,o)$. For $c\leq 0$, the knowledge of  $\Gamma$ and $F$ is not enough, in general the analytic structure of $(X,o)$
plays a crucial role. Hence, in this $c\leq 0$ case,  the  study of
the jumping numbers and multiplicities goes in parallel with the study of (yet open problem of classification of) analytic structures supported on a fixed topological type.

In subsection \ref{ss:cnegative} we  provide some concrete examples in the case $c<0$
to show how the variation on the analytic structure enters in the picture.
Moreover, for weighted homogeneous germs we determine  $m(c)$ for any $c\in\R$ in terms of the (analytic) Poincar\'e series of $(X,o)$.
But a  more  complete study regarding $\{m(c)\}_{c<0}$ will be presented in  forthcoming manuscript including several other  analytic aspects and connections.

The difference between the negative and positive half-lines is already visible via
the following formula (cf. the first part of the proof
of Proposition \ref{lem:chi}).
\begin{equation}\label{eq:sum}
\sum_{c'\leq c}m(c')=\dim\, \cO_{X,o}/ \fI(c ,F)=\mathfrak{h}(L(c))=
\chi(L(c))+p_g -h^1(\cO_{\tX}(-L(c)).
\end{equation}
Here $p_g=p_g(X,o)$ is the geometric genus  of $(X,o)$,
defined as $\dim \, H^1(\widetilde{X},\cO_{\widetilde{X}})$. Usually it is not a  topological invariant of $(X,o)$, however, it is a $c$--independent `universal'
analytic invariant.  Thus, let us focus on the
difference $\mathfrak{h}(L(c))-p_g$. It has two terms, $\chi(L(c))$ is combinatorial,
however,  $h^1(\cO_{\tX}(-L(c))$ is not. The point is that for
$c\geq 0$ this last term is zero (see the second part of the proof of Proposition \ref{lem:chi}) by the Generalized Grauert-Riemenshneider vanishing theorem, independently of the analytic structure of $(X,o)$. On the other hand, for $c<0$ this term can be nonzero, and usually depends essentially on the choice of analytic structure
(see also examples from subsection \ref{ss:cnegative}, and Remark \ref{rem:pinkham}).
We can say that $c>0$ is the `Riemann--Roch zone', while $c\leq 0$ is the
`$h^0$--zone'.

\bekezdes
On the other hand,
the smallest jumping number --- independently of the fact that it
is negative or not --- can be deduced combinatorially.

\begin{lemma}\label{lem:min}
(a) The value   $\mathfrak{c}:=
\min\{c\,:\, m(c)>0\}$ can be determined from $\Gamma$ and $F$ as follows.

Write $Z_K=\sum_ik_iE_i$ and $F=\sum _im_iE_i$.
Then
\begin{equation}\label{eq:cmin}
\mathfrak{c}=\min_{i\in\cV}  \{(1-k_i)/m_i \}.\end{equation}
Moreover, $m(\mathfrak{c})=1$.

(b) (`weak/support periodicity') \
Assume  that there exists a function $f:(X,o)\to (\bC,0)$ and $n\in \Z_{>0}$
such that ${\rm div}_{\phi,E}(f^n)=F$. Then $m(c+\frac{1}{n})\geq m(c)$ for any $c\in\R$.
\end{lemma}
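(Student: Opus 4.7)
My plan is two-pronged: for (a), describe $\fI(c, F)$ by orders of vanishing and pin down the first jump by elementary inequalities on floors; for (b), exploit multiplication by $f$ as a literal translation of the multiplier-ideal filtration.

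For (a), by Lemma \ref{lem:PN} we may replace $L(c)$ by $\max\{0, L(c)\}$, which gives
\begin{equation*}
\fI(c, F) = \{g \in \cO_{X,o} : \mathrm{ord}_{E_i}(g) \geq \lfloor k_i + c m_i \rfloor \text{ for every } i\}.
\end{equation*}
For $c < \min_i(1-k_i)/m_i$ every term $k_i + c m_i$ is strictly less than $1$, so all floors are $\leq 0$ and $\fI(c, F) = \cO_{X,o}$; hence the right-hand side of \eqref{eq:cmin} is the smallest jumping number. To identify $m(\mathfrak{c})$, I would show $\fI(\mathfrak{c}, F) = \mathfrak{m}_{X,o}$. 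Since $\mathfrak{c} \leq (1-k_i)/m_i$ for every $i$, one has $\lfloor k_i + \mathfrak{c} m_i\rfloor \leq 1$ for every $i$; combined with the standard fact that $g \in \mathfrak{m}_{X,o}$ implies $\mathrm{ord}_{E_i}(g) \geq 1$ (its pullback to $\tX$ vanishes along every exceptional curve), this gives $\mathfrak{m}_{X,o} \subseteq \fI(\mathfrak{c}, F)$. The reverse inclusion is immediate because $\max\{0, L(\mathfrak{c})\} \neq 0$ forces every section of $\fI(\mathfrak{c}, F)$ to vanish at $o$. Thus $\fI(\mathfrak{c}, F) = \mathfrak{m}_{X,o}$ and $m(\mathfrak{c}) = 1$, uniformly --- no case split on the number of vertices realizing the minimum is required.

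For (b), the hypothesis $\mathrm{div}_{\phi, E}(f^n) = F$ forces $n \cdot \mathrm{ord}_{E_i}(f) = m_i$, so $m_i/n = \mathrm{ord}_{E_i}(f) \in \Z_{\geq 0}$. This integrality makes the shift by $1/n$ compatible with the floor function:
\begin{equation*}
\lfloor k_i + (c + \tfrac{1}{n}) m_i \rfloor = \lfloor k_i + c m_i \rfloor + \tfrac{m_i}{n}
\end{equation*}
for every $i$. Since $\mathrm{ord}_{E_i}(fg) = m_i/n + \mathrm{ord}_{E_i}(g)$, this yields the equivalence $g \in \fI(c, F) \Leftrightarrow fg \in \fI(c + 1/n, F)$, and the same with $c$ replaced by $c - \epsilon$. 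Multiplication by $f$ is injective on $\cO_{X,o}$ (a domain, as $(X,o)$ is normal), so it descends to an injective linear map
\begin{equation*}
\fI(c - \epsilon, F)/\fI(c, F) \longrightarrow \fI(c + \tfrac{1}{n} - \epsilon, F)/\fI(c + \tfrac{1}{n}, F),
\end{equation*}
giving $m(c) \leq m(c + 1/n)$ as claimed. The only point worth flagging is that the cleanness of the shift rests crucially on $n$ dividing each $m_i$, which is a byproduct of the hypothesis rather than an extra assumption; beyond that, no serious obstacle arises.
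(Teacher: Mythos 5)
Your proof is correct and follows essentially the same route as the paper: for (a), observe that $0<\max\{0,L(\mathfrak{c})\}\leq E$ forces $\fI(\mathfrak{c},F)=\mathfrak{m}_{X,o}$ while $\fI(\mathfrak{c}-\ev,F)=\cO_{X,o}$, and for (b), use $F/n\in L$ to make multiplication by $f$ shift the filtration cleanly and deduce injectivity on the quotients (this is the paper's commutative diagram argument, spelled out via orders of vanishing). One small wording caveat in (b): injectivity of the induced map on quotients does not follow from injectivity of $\cdot f$ on $\cO_{X,o}$ alone; it follows from the equivalence $g\in\fI(c,F)\Leftrightarrow fg\in\fI(c+\tfrac1n,F)$ that you established just before, which is exactly the intersection identity $f\,\fI(c-\ev,F)\cap\fI(c+\tfrac1n,F)=f\,\fI(c,F)$ used in the paper.
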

In the case of $\mathfrak{c}>0$, part (a) was already known (see eg. \cite{Carrami2}), $\mathfrak{c}$ is known as the log-canonical threshold of $F$.

Note that if $(X,o)$ is rational and $\phi$ arbitrary, or $(X,o)$ is minimally elliptic and
$\phi$ is minimal, then any $F\in\calS\setminus \{0\}$ is realized as ${\rm div}_{\phi,E}(f)$ for some
 $f$ \cite[Corollary 7.1.13 and Theorem 7.2.31]{Nkonyv}.

\begin{proof} (a)
Let $\mathfrak{c}'$ be the right hand side of   (\ref{eq:cmin}).
Then $L(\mathfrak{c}')\leq E$, but at least one of its coefficients is one.
Hence  $\fI(\mathfrak{c}' ,F)=\mathfrak{m}_{X,o}$ and
 $\fI(\mathfrak{c}'-\epsilon ,F)=0$.

 (b) Note that $F/n\in L$ hence $L(c+\frac{1}{n})=L(c)+F/n$ for every $c\in\R$. Then consider the commutative diagram (with simplification $\fI(c)=\fI(c,F)$):

\[\begin{tikzcd}
	0 & {\mathfrak{I}(c)} & {\mathfrak{I}(c-\ev)} & {\mathfrak{I}(c-\ev)/\mathfrak{I}(c)} \\
	0 & {\mathfrak{I}(c+1/n)} & {\mathfrak{I}(c-\ev+1/n)} & {\mathfrak{I}(c-\ev+1/n)/\mathfrak{I}(c+1/n)}
	\arrow[from=1-1, to=1-2]
	\arrow[from=1-2, to=1-3]
	\arrow["\cdot f", from=1-2, to=2-2]
	\arrow[from=1-3, to=1-4]
	\arrow["\cdot f", from=1-3, to=2-3]
	\arrow["\cdot{\overline{f}}", from=1-4, to=2-4]
	\arrow[from=2-1, to=2-2]
	\arrow[from=2-2, to=2-3]
	\arrow[from=2-3, to=2-4]
\end{tikzcd}\]

\vspace{2mm}

Above, the vertical morphisms are induced by multiplication by $f\in\cO_{X,o}$. The first two vertical
morphisms are injective. Since the intersection  $f\fI(c-\epsilon)\cap \fI(c+\frac{1}{n})$, as
subspaces of $\fI(c-\epsilon+\frac{1}{n})$,  is $f\fI(c)$, the injectivity of $\cdot \bar{f}$ follows too.
\end{proof}

\begin{example}\label{ex:logcan}
Write $Z_K=\sum_i k_i E_i$.
Following e.g.  \cite[page 56]{kollar},
$(X,o)$ is  {\it numerically log terminal, nlt} (respectively
{\it numerically log canonical, nlc})  if $k_i< 1$ (respectively $k_i\leq 1$)  for all $i\in \cV$.
(If this happens in the minimal good resolution then it happens in any good resolution.)
The {\it nlt} singularities are exactly the quotient singularities (see also \cite[Example 6.3.33]{Nkonyv}).

By (\ref{eq:cmin}), if $(X,o)$ is {\it nlt} (respectively {\it nlc})
 then  $m(c)=0$ for all $c\leq 0$  (respectively  $c<0$) and  for any $F\in \calS\setminus \{0\}$.
 In the {\it nlc} case,
  $(X,o)$ is either (special) rational or $Z_K=E$ (cf. \cite{kollar} or
\cite[Example 6.3.33]{Nkonyv}). This latter case happens if $(X,o)$ is either cusp or simple elliptic.
If $Z_K=E$ then $\mathfrak{c}=0$ and $m(0)=1$.

In the special case when  $(X,o)$ is smooth or it is of type ADE then $Z_K\leq 0$,
hence $\mathfrak{c}>0$ again.

\end{example}

\subsection{Some examples regarding $m(c)$ with $c\leq 0$. }\label{ss:cnegative}

\begin{example}\label{ex:rat}
Negative $\mathfrak{c}$ can appear even for  $(X,o)$ rational.
Take e.g. the following rational graph, and let $F$ be determined by the arrow:
if $i(a)$ is the vertex which supports the arrow, then $F= E^*_{i(a)}$.

$$
    \begin{picture}(0,45)(200,0)
\put(125,25){\circle*{4}} \put(150,25){\circle*{4}}
\put(175,25){\circle*{4}} \put(200,25){\circle*{4}}
\put(200,5){\circle*{4}}  \put(225,5){\circle*{4}}
\put(225,25){\circle*{4}} \put(250,25){\circle*{4}}
 \put(125,25){\line(1,0){125}}
\put(200,25){\line(0,-1){20}}
\put(225,25){\line(0,-1){20}}
\put(200,25){\vector(-1,-1){20}}

\put(125,35){\makebox(0,0){$-2$}}
\put(150,35){\makebox(0,0){$-2$}}
\put(175,35){\makebox(0,0){$-2$}}
\put(200,35){\makebox(0,0){$-2$}}
\put(225,35){\makebox(0,0){$-3$}}
\put(250,35){\makebox(0,0){$-2$}}
\put(210,5){\makebox(0,0){$-2$}}
\put(235,5){\makebox(0,0){$-2$}}

\end{picture}
$$

Then, by a computation,   $\mathfrak{c}=-1/4$ and  $m(-1/4)=1$.

In this case $D_0\not=0$, but $m(0)=0$. Indeed, $Z_K$ has some integral coefficients
hence $D_0\not =0$. On the other hand,  both $L(0)$ and $L(-\epsilon)$ are elements of $L_{>0}$,
but they are smaller then the Artin fundamental cycle. Hence both jumping ideals are $\mathfrak{m}_{X,o}$.

\end{example}

\begin{example}\label{ex:sum} (a)
For any  $(X,o)$, for any good $\phi$ and any $F$,  the identity (\ref{eq:sum}) (via the vanishing
$h^1(\cO_{\tX}(-\lfloor Z_K\rfloor))=0$, cf. the proof of Proposition \ref{lem:chi})
in the case of $c=0$  reads as
$$\sum_{c'\leq 0}m(c')=\chi(\lfloor Z_K\rfloor) +p_g.$$
Note that  this number is independent of the choice of $F$, however, the distribution of the jumping numbers might depend on $F$.

In  rational case of Example \ref{ex:rat}, $p_g=0$ and $\chi(\lfloor Z_K\rfloor)=1$.

If $(X,o)$ is numerically Gorenstein, then $\chi(\lfloor Z_K\rfloor)=\chi(Z_K)=0$, hence $\sum_{c'\leq 0}m(c')=p_g$.

\vspace{1mm}

(b)
Consider the next rational graph, where the number of $(-2)$ curves in all
the legs is $(2n-1)$.

$$
\begin{picture}(100,55)(100,-20)
\put(150,20){\makebox(0,0){\footnotesize{$-3$}}}

\put(125,-12){\makebox(0,0){\footnotesize{$-2$}}}
\put(125,32){\makebox(0,0){\footnotesize{$-2$}}}
\put(100,-12){\makebox(0,0){\footnotesize{$-2$}}}
\put(100,32){\makebox(0,0){\footnotesize{$-2$}}}
\put(75,-12){\makebox(0,0){\footnotesize{$-2$}}}
\put(75,32){\makebox(0,0){\footnotesize{$-2$}}}

\put(200,-12){\makebox(0,0){\footnotesize{$-2$}}}
\put(200,32){\makebox(0,0){\footnotesize{$-2$}}}
\put(225,-12){\makebox(0,0){\footnotesize{$-2$}}}
\put(225,32){\makebox(0,0){\footnotesize{$-2$}}}
\put(250,-12){\makebox(0,0){\footnotesize{$-2$}}}
\put(250,32){\makebox(0,0){\footnotesize{$-2$}}}

\put(175,20){\makebox(0,0){\footnotesize{$-3$}}}

\put(150,10){\circle*{4}}
\put(150,10){\line(1,0){25}}
\put(150,10){\line(-2,1){25}}
\put(150,10){\line(-2,-1){25}}
\put(175,10){\circle*{4}}
\put(175,10){\line(2,1){25}}
\put(175,10){\line(2,-1){25}}

\put(200,22){\circle*{4}}
\put(213,22){\makebox(0,0){$\dots$}}
\put(200,-2){\circle*{4}}
\put(213,-2){\makebox(0,0){$\dots$}}
\put(225,-2){\circle*{4}}
\put(225,-2){\line(1,0){25}}
\put(225,22){\circle*{4}}
\put(225,22){\line(1,0){25}}
\put(250,-2){\circle*{4}}
\put(250,22){\circle*{4}}

\put(125,22){\circle*{4}}
\put(113,22){\makebox(0,0){$\dots$}}
\put(125,-2){\circle*{4}}
\put(113,-2){\makebox(0,0){$\dots$}}

\put(100,22){\circle*{4}}
\put(75,22){\circle*{4}}
\put(100,22){\line(-1,0){25}}
\put(100,-2){\line(-1,0){25}}

\put(100,-2){\circle*{4}}
\put(75,-2){\circle*{4}}

\end{picture}
$$
Set $F:=E$, it is ${\rm div}_{\phi,E}$ of the generic function.
A computation shows that the multiplicity of $Z_K$ at the nodes is $n$,
by (\ref{eq:cmin})
$\mathfrak{c}=1-n$, and  $\chi(\lfloor Z_K\rfloor)=n$. Hence,
by part (a) $\sum _{c\leq 0}m(c)=n$  and
by Lemma \ref{lem:min}{\it (b)},
necessarily $\sum _{c\leq 0}m(c)t^c=t^{1-n}+\cdots+t^{-1}+t^0$.

\end{example}

\begin{remark}
  In the next subsection (Proposition \ref{lem:chi})
  we determine   $m(0)$  for any $(X,o)$. It depends on the choice of the analytic structure of $(X,o)$ but it does not depend on the choice of  $F$.
\end{remark}

\begin{example}\label{ex:Laufer}
Consider the following resolution graph.

    $$
    \begin{picture}(0,35)(200,5)
\put(125,25){\circle*{4}} \put(150,25){\circle*{4}}
\put(175,25){\circle*{4}}

 \put(125,25){\line(1,0){50}}

\put(125,35){\makebox(0,0){$-1$}}
\put(150,35){\makebox(0,0){$-2$}}
\put(175,35){\makebox(0,0){$-2$}}
\put(250,25){\makebox(0,0){$E_1$}}
\put(275,25){\makebox(0,0){$E_2$}}
\put(300,25){\makebox(0,0){$E_3$}}
\put(125,15){\makebox(0,0){$[1]$}}

\end{picture}
$$

The graph is weakly elliptic.
It turns out that depending on the analytic structure  $1\leq p_g\leq 3$ (see e.g.
\cite[7.2.C and 7.2.D]{Nkonyv}).
$Z_K=3E_1+2E_2+E_3$, hence $Z_K\in L$, and  $\sum_{c\leq 0}m(c)=p_g$.

Taking $E_1$ (the elliptic irreducible exceptional divisor) as a   central vertex, the link is a Seifert 3-manifold, and $(X,o)$ can be chosen weighted homogeneous.
The point is that we can choose  three different families  of weighted homogeneous
analytic structures, see \cite{pinkham} (or \cite[Example 5.1.30]{Nkonyv}).
Certain  representatives are the following.

(i) $(X,o)$ is the hypersurface singularity $z^2=y^3+x^{18}$ with $p_g=3$,

(ii)  $(X,o)$ is the hypersurface singularity $z^2=x(y^4+x^6)$ with $p_g=2$,

(iii) $(X,o)$ is not Gorenstein, with $p_g=1$.

Take $F=E_1+E_2+E_3$. Then there are three values $c\leq 0$ with non-trivial jumping divisors: $c\in\{-2,-1,0\}$.
For any analytic structure, $L(-2)=E_1$ and $L(-2-\epsilon)=0$, hence
$m(-2)=1$.  On the other hand, taking $c=0$ we get $L(0)=Z_K$, $L(-\ev)=2E_1+E_2$, and $\chi(L(0))=\chi(L(-\ev))=0$. Therefore,
as we will prove in  Proposition \ref{lem:chi}, $m(0)=\mathfrak{h}^0$,
which equals 1 if $(X,o)$ is Gorenstein (in the first two cases).
In case (iii) it should be 0 since $\sum_{c\leq 0} m(c)=1$, but
we already know that $m(-2)=1$.
Hence $\sum_{c\leq 0}m(c)t^c$ is $t^{-2}+t^{-1}+t^{0}$ in case (i),
 $t^{-2}+t^{0}$ in case (ii), and
  $t^{-2}$ in case (iii).
\end{example}

\subsection{The computation of $m(c)$ for $c\geq 0$.}\label{s:cpoz}
\bekezdes
In this
subsection we determine $m(c)$ for all $c\geq 0$, for all possible $F$'s,  and  for all analytic structures.

Though the cycles $L(c)$ are very special, still, in principle,
the output of the substitution  $c\mapsto \mathfrak{h}(L(c))$
might  depend on the analytic type of $(X,o)$, usually it cannot be determined from $\Gamma$.
This fact will follow from  Proposition \ref{lem:chi} where we  connect
$\mathfrak{h}(L(c))$  with the geometric genus $(X,o)$,
which usually is not a topological invariant of $(X,o)$.

However, for any $c>0$ we  will show that $\mathfrak{h}(L(c))-p_g $ is combinatorial. Hence, via (\ref{eq:m}),
$m(c)$ is also combinatorial,
and we will provide several formulae for it.

For  $c=0$ an additional analytic contribution should also be considered:
$$\mathfrak{h}^0:=h^0(D_0, \cO_{D_0}(K_{\tX}+\lfloor Z_K\rfloor)).$$
Recall that from the definition of $D_0$, $D_0=E$  exactly when $(X,o)$ is numerically  Gorenstein: $Z_K\in L$.
\begin{lemma}\label{lem:hnulla}
$\mathfrak{h}^0\in\{0,1\}$. More precisely,

 (a)  $\mathfrak{h}^0=0$ exactly when either
 (i) $D_0\not=E$, or (ii) $D_0=E$ but there exists $E_i$ such that
 $\cO_{\tX}(K_{\tX}+\lfloor Z_K\rfloor)|_{E_i}$ is not trivial in ${\rm Pic}^0(E_i)
 \simeq \C^{g_i}$.
 (In this second case, necessarily $g_i>0$.)

 (b)  $\mathfrak{h}^0=1$ exactly when $D_0=E$  and
     $\cO_{\tX}(K_{\tX}+\lfloor Z_K\rfloor )|_{E_i}=0$ in ${\rm Pic}^0(E_i)$ for every $i\in\cV$.
\end{lemma}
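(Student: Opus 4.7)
The plan is to set $\mathcal{L}:=\cO_{\tX}(K_{\tX}+\lfloor Z_K\rfloor)$ and to compute $h^0(D_0,\mathcal{L}|_{D_0})$ case by case via degree estimates and reduction through short exact sequences. For $E_i\subset D_0$ (equivalently $k_i\in\Z$), I would first use $c_1(\cO_{\tX}(K_{\tX}))=-Z_K$ to compute
\[
\deg \mathcal{L}|_{E_i}=(\lfloor Z_K\rfloor -Z_K,E_i)=-(\{Z_K\},E_i)\le 0,
\]
with equality if and only if $E_i$ is disjoint from $E\setminus D_0$. Indeed, $\{Z_K\}=\sum_{k_j\notin\Z}\{k_j\}E_j$ is supported on $E\setminus D_0$ with positive fractional coefficients, while $(E_j,E_i)\ge 0$ for $j\ne i$. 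In particular all these degrees vanish precisely when $D_0=E$.

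If $D_0\neq E$, I would show $\mathfrak{h}^0=0$ by induction on the number of components of $D_0$. The case $D_0=0$ is trivial. Otherwise, connectedness of $\Gamma$ implies that every connected component of $D_0$ contains some $E_k$ adjacent to $E\setminus D_0$, so $\deg\mathcal{L}|_{E_k}<0$. The sequence $0\to \mathcal{L}(-E_k)|_{D_0-E_k}\to \mathcal{L}|_{D_0}\to \mathcal{L}|_{E_k}\to 0$ together with $H^0(E_k,\mathcal{L}|_{E_k})=0$ reduces the question to $D_0-E_k$ with the twist $\mathcal{L}(-E_k)$. The twisted bundle still has nonpositive degrees on each remaining component and strictly negative degree on every neighbour of $E_k$, so every connected component of $D_0-E_k$ retains a vertex of strictly negative degree (either adjacent to $E_k$, or inherited from the original hypothesis). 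Induction concludes.

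Assume now $D_0=E$. All degrees vanish, so any section of $\mathcal{L}|_E$ restricted to a fixed $E_i$ is either identically zero or nowhere vanishing. Fixing $E_{i_0}$, a global section is determined by its restriction there (a choice in a space of dimension at most one), then propagated uniquely to every adjacent component by the gluing data at the intersection points and the degree-zero rigidity, and so on by connectedness of $E$. This gives the upper bound $h^0(E,\mathcal{L}|_E)\le 1$. If moreover some $\mathcal{L}|_{E_i}$ is nontrivial in $\Pic^0(E_i)$ (which forces $g_i>0$), then $H^0(E_i,\mathcal{L}|_{E_i})=0$; any global section therefore vanishes on $E_i$, hence at every intersection of $E_i$ with a neighbour $E_j$, and then on $E_j$ as a section of a degree-zero line bundle with a zero; connectedness forces the section to vanish globally, giving $\mathfrak{h}^0=0$.

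It remains to handle $D_0=E$ with every $\mathcal{L}|_{E_i}$ trivial in $\Pic^0(E_i)$. Combined with the upper bound, it suffices to show $\mathcal{L}|_E\cong \cO_E$ as line bundles on $E$, which then yields $h^0(E,\cO_E)=1$ by connectedness. This is the main obstacle: on a reduced curve $E$ whose dual graph carries $h>0$ independent cycles, the kernel of $\Pic^0(E)\to \prod_i\Pic^0(E_i)$ is a torus $(\C^*)^h$, so componentwise triviality alone does not force global triviality in an abstract setting. The resolution should come from the fact that $\mathcal{L}|_E$ is restricted from the bundle $\cO_{\tX}(K_{\tX}+Z_K)$ on $\tX$ attached to the integral (numerically Gorenstein) cycle $K_{\tX}+Z_K$, which should force the associated gluing cocycle along the cycles of $\Gamma$ to be trivial.
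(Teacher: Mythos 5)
For part (a) your argument is essentially the paper's: compute $c_1\calL=-\{Z_K\}$ for $\calL:=\cO_{\tX}(K_{\tX}+\lfloor Z_K\rfloor)$, note that $\deg\calL|_{E_i}\le 0$ on components of $D_0$ with strict inequality precisely when $E_i$ meets $E\setminus D_0$, and then reduce by short exact sequences along reduced connected chains. The paper builds the chain up from a strictly-negative-degree component $E_{i'}$ (setting $D'_1=E_{i'}$, $D'_{m+1}=D'_m+E_{i(m)}$ with $(D'_m,E_{i(m)})>0$, and using $0\to\cO_{E_{i(m)}}(-D'_m)\otimes\calL\to\cO_{D'_{m+1}}\otimes\calL\to\cO_{D'_m}\otimes\calL\to 0$), while you peel components off $D_0$ carrying a running twist; these are mirror images of the same device and both are valid. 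Part (a)(ii) is likewise the same: one component with $h^0=0$ propagates the vanishing along the chain.

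For part (b), your upper bound $h^0(\calL|_E)\le 1$ (injectivity of restriction to a fixed $E_{i_0}$, followed by propagation of a zero through degree-zero rigidity) is fine, and you have correctly isolated the remaining issue: componentwise triviality of $\calL|_{E_i}$ need not force triviality of $\calL|_E$ when the dual graph has $h>0$ independent cycles, because of the $(\C^*)^h$ gluing torus. However your proposed fix does not work as stated: the restriction map $\Pic^0(\tX)\to\Pic^0(E)$ is \emph{surjective} (from $0\to\cO_{\tX}(-E)\to\cO_{\tX}\to\cO_E\to 0$ and $H^2(\cO_{\tX}(-E))=0$), so being pulled back from $\tX$ imposes no constraint on the gluing class; that class genuinely depends on the analytic structure. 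So the lower bound $\mathfrak{h}^0\ge 1$ under the hypotheses of (b) is a real gap in your proposal. You should be aware that the paper's own proof of (b) is terse at exactly this point: it runs the chain up to $D'_N=E$ and asserts equality $h^0(\cO_{D'_{m+1}}\otimes\calL)=h^0(\cO_{D'_m}\otimes\calL)$ at every step, but the exact sequence only gives $\le$; equality amounts to lifting a nowhere-vanishing section across $E_{i(m)}$, which is automatic when $(D'_m,E_{i(m)})=1$ (just rescale the trivialization on $E_{i(m)}$ to match at the single node) but is a nontrivial matching condition when $(D'_m,E_{i(m)})>1$, which must occur at some step when $h>0$. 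For $h=0$ (tree graph, which covers all the worked examples in the paper) the argument is complete; for $h>0$ one either needs an extra argument at the cycle-closing steps or should state the condition in (b) as global triviality of $\calL|_E$ in $\Pic^0(E)$.
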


\begin{example} (a) If  $(X,o)$ is Gorenstein, then
$\cO_{\tX}(K_{\tX}+\lfloor Z_K\rfloor )=\cO_{\tX}$, and
  $\mathfrak{h}^0=h^0(\cO_E)=1$.

(b)
If $g=\sum_ig_i=0$ then  $\mathfrak{h}^0=1$ in the numerically Gorenstein case,
and it is zero otherwise.

(c) If $(X,o)$ is rational, then $g=0$ automatically, hence $\mathfrak{h}^0=1$ in the ADE (and smooth) cases (the only numerically Gorensein rational cases) and
 $\mathfrak{h}^0=0$ otherwise.

 (d) If the link is an integral homology sphere (hence $L=L'$) then  $g=0$ and
 $Z_K\in L$, hence $\mathfrak{h}^0=1$.

(e) In general $\mathfrak{h}^0$ is not topological: for the germs from
Example \ref{ex:Laufer} $\mathfrak{h}^0=1$ in the Goresntein cases and $0$ otherwise.
(Use $\Delta_0=0$, $\chi(D_0)=0$ and  the formula $m(0)=\mathfrak{h}^0$ from
Corollary \ref{cor:m0}{\it (a')}.)
\end{example}
\begin{proof}[Proof of Lemma \ref{lem:hnulla}]
    Write $\calL:=\cO_{\tX}(K_{\tX}+\lfloor Z_K\rfloor )$. Then $c_1\calL=-\{Z_K\}$.

{\it  (a)} \ Assume that $Z_K\not\in L$ (that is, $D_0\not= E$), hence $\D_0=\{Z_K\}\not=0$. Recall that $D_0$ is reduced, let  $D'$ be one of its connected components. Then
$(D', c_1\calL)<0$, since $(D',\Delta_0)\not=0$. In particular, there exists $E_{i'}$ in the support of $D'$
such that $(E_{i'},c_1\calL)<0$. Then we claim that $h^0(\cO_{D'}\otimes\calL)=0$.

Indeed, consider a sequence of reduced connected
divisors $\{D'_{n}\}_{n=1}^N$ such that   $D'_1=E_{i'}$, 
 $ D'_{m+1}=D'_m+E_{i(m)}$ with $(D'_m,E_{i(m)})>0$,  and $D'_N=D'$.
Then the vanishing follows inductively from the exact sequences
$0\to \cO_{E_{i(m)}}(-D'_m)\otimes \calL\to \cO_{D'_{m+1}}\otimes \calL\to
 \cO_{D'_{m}}\otimes \calL \to 0$.

 Assume next that $Z_K\in L$ (hence $D_0=E$) but
 $\calL|_{E_{i'}}$ is not trivial in ${\rm Pic}^0(E_{i'})$
 for some $i'$. This means that $h^0(E_{i'}, \calL|_{E_{i'}})=0$.
 Indeed, since  $\calL|_{E_{i'}}$ is topologically trivial (its Chern class is zero), any nonzero element of
  $H^0(E_{i'}, \calL|_{E_{i'}})$ would provide a trivialization of
  $\calL|_{E_{i'}}$.

  Then we can repeat the above proof  with a similar sequence with
   $D'_1=E_{i'}$ and  $D'_N=E$.

  {\it (b)} \ It follows by induction using a similar sequence of divisors
  with  $D'_N=E$ and $D'_1=E_{i'}$ arbitrary choosen. Indeed,
  $h^0(\cO_E\otimes \calL)= h^0( \cO_{D'_{m}}\otimes \calL) =h^0(\cO_{E_{i'}}\otimes \calL)= h^0(\cO_{E_{i'}})=1$.
\end{proof}

\begin{proposition}\label{lem:chi} (a) If  $c\in {\mathbb R}_{\geq 0}$ then
 $\fh(L(c))=\chi(L(c))+p_g$.

Thus, $\dim \cO_{X,o}/\fI(c,F)-p_g=\mathfrak{h}(L(c))-p_g=\chi(L(c))$
is combinatorially   computable from  $\Gamma$.

In particular, if $(X,o)$ is rational, then  $\dim \cO_{X,o}/\fI(c,F)$ is
completely combinatorial.

(b)  $m(c)=\chi(L(c))-\chi(L(c-\ev))$ for any $c>0$,
hence it is always combinatorial.

\ \ \ \ \  $m(0)=\chi(L(0))-\chi(L(-\ev))+\mathfrak{h}^0$,
hence $m^*(0):=m(0)-\mathfrak{h}^0$ is combinatorial.
\end{proposition}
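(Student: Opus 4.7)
The proof will hinge on equation~(\ref{eq:sum}): $\mathfrak{h}(L(c)) = \chi(L(c)) + p_g - h^1(\cO_{\tX}(-L(c)))$, so I plan to reduce part~(a) to proving $h^1(\cO_{\tX}(-L(c))) = 0$ for $c \geq 0$, and to obtain this from the Generalized Grauert--Riemenschneider vanishing theorem for $\bQ$-divisors on a log resolution. Indeed, rewriting $-L(c) = \lceil -Z_K - cF\rceil$, one has numerically $(-Z_K - cF) - K_{\tX} \equiv -cF$; for $c \geq 0$ and $F \in \calS$ this is $\phi$-nef, and its support lies in the SNC exceptional divisor. The generalized vanishing then yields $R^1\phi_*\cO_{\tX}(-L(c)) = 0$, and (\ref{eq:sum}) gives (a).

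Part~(b) for $c > 0$ will follow immediately by applying (a) at both $L(c)$ and $L(c-\ev)$ (with $0 < \ev < c$); the $p_g$-terms cancel in the difference, which by (\ref{eq:m}) equals $m(c)$.

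For $c = 0$ the right endpoint still satisfies $\mathfrak{h}(L(0)) = \chi(L(0)) + p_g$ by (a), but at $c = -\ev$ the vanishing can fail and must be quantified. My plan is to use $L(0) = L(-\ev) + D_0$ and the short exact sequence
\[ 0 \to \cO_{\tX}(-L(0)) \to \cO_{\tX}(-L(-\ev)) \to \cO_{D_0}(-L(-\ev)) \to 0. \]
Since $h^1(\cO_{\tX}(-L(0))) = 0$ by (a) and $R^2\phi_* = 0$ for a surface resolution, its long exact sequence gives an isomorphism $h^1(\cO_{\tX}(-L(-\ev))) \cong h^1(\cO_{D_0}(-L(-\ev)))$. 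Because $D_0$ is a reduced effective Cartier divisor on the smooth surface $\tX$ it is Gorenstein, with dualizing sheaf $\omega_{D_0} = \cO_{D_0}(K_{\tX} + D_0)$ by adjunction, so Serre duality on $D_0$ will produce
\[ h^1(\cO_{D_0}(-L(-\ev))) = h^0(\cO_{D_0}(K_{\tX} + D_0 + L(-\ev))) = h^0(\cO_{D_0}(K_{\tX} + \lfloor Z_K\rfloor)) = \mathfrak{h}^0. \]
Substituting into $m(0) = \mathfrak{h}(L(0)) - \mathfrak{h}(L(-\ev)) = \chi(L(0)) - \chi(L(-\ev)) + h^1(\cO_{\tX}(-L(-\ev)))$ will deliver the stated formula.

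The main obstacle will be invoking the correct form of Generalized Grauert--Riemenschneider: one needs to ensure that the $\phi$-nefness of the $\bQ$-divisor $-cF$ (together with SNC support of the exceptional divisor) suffices for the vanishing $R^1\phi_*\cO_{\tX}(-L(c)) = 0$, without any $\bQ$-Gorenstein assumption on $(X,o)$ and while allowing $Z_K$ to have nonintegral coefficients. Once that vanishing is in hand, the remainder is a routine combination of the exact sequence above, Riemann--Roch, and Serre duality on $D_0$.
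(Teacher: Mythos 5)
Your proof is correct and follows essentially the same route as the paper's: write $L(c)=P-N$ and use Lemma~\ref{lem:PN} to get $\mathfrak{h}(L(c))=\chi(L(c))+p_g-h^1(\cO_{\tX}(-L(c)))$, then kill the $h^1$ for $c\geq 0$ by computing the twist of $\cO_{\tX}(-L(c))$ by $-K_{\tX}$ and invoking the Generalized Grauert--Riemenschneider theorem. Two small remarks. First, your opening reference to~(\ref{eq:sum}) is slightly circular as stated, since that identity is itself derived ``in the first part of the proof of Proposition~\ref{lem:chi}''; you should make clear that the needed equality $\mathfrak{h}(L(c))=\chi(L(c))+p_g-h^1(\cO_{\tX}(-L(c)))$ comes independently from Lemma~\ref{lem:PN} and the long exact sequence of $0\to\cO_{\tX}(-P)\to\cO_{\tX}\to\cO_P\to 0$, not from the proposition itself. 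Also, when you write ``$(-Z_K-cF)-K_{\tX}\equiv -cF$'', remember that after rounding, the relevant Chern class is $c_1\big(\cO_{\tX}(-L(c))\big)+Z_K=\Delta_c-cF$; the extra effective fractional part $\Delta_c$ is harmless for GGR but should appear in the bookkeeping. Second, for $c=0$ you and the paper use the same short exact sequence $0\to\cO_{\tX}(-L(0))\to\cO_{\tX}(-L(-\ev))\to\cO_{D_0}(-L(-\ev))\to 0$, but you extract $h^1(\cO_{\tX}(-L(-\ev)))\cong h^1(\cO_{D_0}(-L(-\ev)))$ from the $H^1$ piece and substitute into the $\mathfrak{h}$--formula, whereas the paper reads $m(0)=h^0(\cO_{D_0}(-L(-\ev)))$ directly from the $H^0$ piece and then applies Riemann--Roch on $D_0$ together with Serre duality; these are dual readings of the same long exact sequence and give identical results.
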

The case $c>0$ in part (b) for rational $(X,o)$ was proved in \cite[Prop. 4.10]{Carrami}.

\begin{proof} (a)
Write  $L(c)= P-N$ as in Lemma \ref{lem:PN}.
Then,  by this Lemma,   $\mathfrak{h}(L(c))=\mathfrak{h}(P)$.
The short exact sequence of sheaves
$0\to \cO_{\tX}(-P)\to \cO_{\tX}\to \cO_P\to 0$ provides
the exact sequence
\[\begin{tikzcd}[column sep = small]
	0 & {H^0(\cO_{\tX})/H^0(\cO_{\tX}(-P))} & {H^0(\cO_P)} & {H^1(\cO_{\tX}(-P))} & {H^1(\cO_{\tX})} & {H^1(\cO_P)} & 0.
	\arrow[from=1-1, to=1-2]
	\arrow[from=1-2, to=1-3]
	\arrow[from=1-3, to=1-4]
	\arrow[from=1-4, to=1-5]
	\arrow[from=1-5, to=1-6]
	\arrow[from=1-6, to=1-7]
\end{tikzcd}\]
By this sequence and part {\it (b)} of Lemma \ref{lem:PN}
\begin{align*}
    \fh(P)&=\chi(P)-h^1(\cO_{\tX}(-P))+p_g \\
    &=\chi(P-N+N)-\chi(\cO_N(-P+N))-h^1(\cO_{\tX}(-P+N))+p_g \\
    &=\chi(P-N)+\chi(N)-(N,P-N)-(N,-P+N)-\chi(N)-h^1(\cO_{\tX}(-P+N))+p_g \\
    &= \chi(L(c))-h^1(\cO_{\tX}(-L(c))+p_g.
\end{align*}
Finally, we show that $h^1(\cO_{\tX}(-L(c)))=0$. To do this, set $\cL'=\cO_{\tX}(-L(c))$. Then $c_1(\cL')=-(Z_K+cF)+\D_c$.
Hence $c_1(\cL'(-K_{\tX}))=\D_c-cF$, where $F\in \calS'$. Thus  we can apply the Generalized Grauert-Riemenschneider Theorem (see \cite[Theorem 6.4.3.]{Nkonyv}),
which gives  $h^1(\tX,\cL')=0$.

Part {\it (b)} for $c>0$ follows from part {\it (a)} whenever $c>0$.
Indeed, in that case  $c-\epsilon\geq 0$ too, hence {\it (a)} applies for both $c$ and $c-\epsilon$.

Next, we assume that $c=0$. Then $L(0)=\lfloor Z_K\rfloor$. Consider the exact sequence
$0\to \cO_{\tX}(-\lfloor Z_K\rfloor) \to  \cO_{\tX}(-\lfloor Z_K\rfloor+D_0)\to
 \cO_{D_0}(-\lfloor Z_K\rfloor+D_0)\to 0$. Since by the
 Generalized Grauert-Riemenschneider Theorem (see \cite[Theorem 6.4.3.]{Nkonyv})
 $h^1( \cO_{\tX}(-\lfloor Z_K\rfloor))=0$, we get
 \begin{align*}
 m(0)&=\dim H^0( \cO_{\tX}(-\lfloor Z_K\rfloor+D_0))/
 H^0( \cO_{\tX}(-\lfloor Z_K\rfloor))=h^0 (\cO_{D_0}(-\lfloor Z_K\rfloor+D_0))\\
 &= \chi(\cO_{D_0}(-\lfloor Z_K\rfloor+D_0))+
 h^1 (\cO_{D_0}(-\lfloor Z_K\rfloor+D_0))\\
 &=\chi(L(0))-\chi(L(-\ev))+h^1 (\cO_{D_0}(-\lfloor Z_K\rfloor+D_0)).
 \end{align*}
 But $h^1 (\cO_{D_0}(-\lfloor Z_K\rfloor+D_0))= h^0 (\cO_{D_0}(K_{\tX}+\lfloor Z_K\rfloor))$ by Serre duality.
\end{proof}
\begin{corollary} \label{cor:m0}
(a) \ $m(c)=(D_c,\D_c)-\chi(D_c)-c(D_c,F)$ for $c>0$.

(a') \ $m(0)=(D_0,\D_0)-\chi(D_0)+\mathfrak{h}^0$.

(b) \ $m(c+1)=m(c)-(D_c,F)$ for $c>0$.

(b') \ $m(1)=m(0)-\mathfrak{h}^0-(D_0,F)$.

Above, in (a)--(b'), for any fixed $c\geq 0$,
the term $(D_c,F)$ equals  $\sum (E_i,F)$, where the sum is over those vertices  $i=i(a)\in \mathcal{V}$ with  $k_{i(a)}+cm_{i(a)}\in\mathbb{Z}$.

(c) \ Write $m^*(c)$ for $m(c)$ if $c>0$ and $m(0)-\mathfrak{h}^0$ if $c=0$.
Then $J_{[0,\infty)}(t):=\sum_{c\geq 0} m(c)t^c$ (where
$t$ is a formal variable) equals
$$J_{[0,\infty)}(t)=\mathfrak{h}^0t^0+
\sum_{c\in [0,1)}\ t^c\cdot  \big(m^*(c) /(1-t) -(D_c,F)t/(1-t)^2\,\big).$$
\end{corollary}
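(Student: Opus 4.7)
The plan is to derive (a) and (a') directly from Proposition \ref{lem:chi} via a short Riemann--Roch computation, to deduce (b) and (b') by exploiting the fact that $D_c$ and $\D_c$ are invariant under $c\mapsto c+1$, to read off the description of $(D_c,F)$ from the support description of $D_c$, and finally to obtain (c) by summing geometric series over arithmetic progressions $\{c_0+k\}_{k\geq 0}$.

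For (a), start with $m(c)=\chi(L(c))-\chi(L(c-\ev))$ from Proposition \ref{lem:chi}(b) and write $L(c)=L(c-\ev)+D_c$. From $\chi(l')=(Z_K-l',l')/2$ one gets the identity $\chi(l+D)-\chi(l)=\chi(D)-(l,D)$ by direct expansion, so $m(c)=\chi(D_c)-(L(c-\ev),D_c)$. Substituting $L(c-\ev)=Z_K+cF-\D_c-D_c$ and using $(Z_K,D_c)=2\chi(D_c)+(D_c,D_c)$ to cancel, everything collapses to $m(c)=(D_c,\D_c)-\chi(D_c)-c(D_c,F)$. The same argument, now with the extra $\mathfrak{h}^0$ term from Proposition \ref{lem:chi}(b) for $c=0$, yields (a'). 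This collapse is the one computational step of the proof, so I expect it to be the only place where care is needed.

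For (b) and (b'), the key observation is that $F$ is integral, hence $\D_{c+1}=\{Z_K+(c+1)F\}=\{Z_K+cF\}=\D_c$, and therefore $D_{c+1}=D_c$ as well. Applying (a) to both $c>0$ and $c+1$ and subtracting gives $m(c+1)-m(c)=-(D_c,F)$, which is (b); combining (a) at $c=1$ with (a') at $c=0$ gives (b'). The statement $(D_c,F)=\sum_{i:\,k_i+cm_i\in\Z}(E_i,F)$ is just the already-noted fact that $D_c$ is the reduced divisor supported on exactly those $E_i$ with $k_i+cm_i\in\Z$.

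For (c), iterate (b) to obtain $m(c_0+k)=m(c_0)-k(D_{c_0},F)$ for every $c_0>0$ and integer $k\geq 0$, and combine (b') with (b) to get $m(k)=m^*(0)-k(D_0,F)$ for every integer $k\geq 1$. Using the standard identities $\sum_{k\geq 0}t^k=1/(1-t)$ and $\sum_{k\geq 1}kt^k=t/(1-t)^2$, the contribution of the progression $\{c_0+k\}_{k\geq 0}$ to $\sum_{c\geq 0}m(c)t^c$ is precisely $t^{c_0}\bigl(m^*(c_0)/(1-t)-(D_{c_0},F)t/(1-t)^2\bigr)$ for any $c_0\in[0,1)$, where we set $m^*(c_0)=m(c_0)$ when $c_0>0$. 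Summing over $c_0\in[0,1)$ (only finitely many terms are nonzero, since only finitely many $c_0\in[0,1)$ have $D_{c_0}\neq 0$) recovers $\sum_{c>0}m(c)t^c+m^*(0)$; adding the missing $\mathfrak{h}^0 t^0$ so that $m^*(0)+\mathfrak{h}^0=m(0)$ produces the claimed formula for $J_{[0,\infty)}(t)$. The only bookkeeping subtlety is keeping track of the $c_0=0$ case, where the series starts at $k=1$ rather than $k=0$ and the $\mathfrak{h}^0$ correction has to be reinserted by hand.
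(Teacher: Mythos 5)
Your proof is correct and follows the same route the paper takes: derive (a) via the Riemann--Roch identity $\chi(l+D)-\chi(l)=\chi(D)-(l,D)$ applied to Proposition \ref{lem:chi}(b), then substitute $L(c-\ev)=Z_K+cF-\D_c-D_c$ and use $(Z_K,D_c)=2\chi(D_c)+(D_c,D_c)$. The paper only writes out (a) and remarks that the rest "follow similarly or from this one"; your deductions of (a'), (b), (b') by $D_{c+1}=D_c$, $\D_{c+1}=\D_c$, and of (c) by summing the arithmetic-geometric series over each progression $\{c_0+k\}_{k\ge 0}$ with the $c_0=0$ bookkeeping, are exactly the intended fill-in.
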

\begin{proof}
We prove {\it (a)}, the others follow similarly or  from this one.
\begin{align*}
    m(c)&=\chi(L(c-\epsilon)+D_c)-\chi(L(c-\epsilon))
    =\chi(D_c)-(D_c,L(c-\epsilon)) \\ &= \chi(D_c)-(D_c,Z_K+cF-\D_c-D_c)
    =\chi(D_c)-2\chi(D_c)+(D_c,\D_c-cF).
\end{align*}

\vspace*{-7mm}

\end{proof}

\begin{example}\label{ex:rat2} For the rational graph from Example \ref{ex:rat} one gets
 $$J_{[0,\infty)}(t)=\frac{t}{(1-t)^2}(1+t^{1/4}+t^{2/4}+t^{3/4})+\frac{2}{1-t}t^{3/4}
=2t^{3/4}+t+t^{5/4}+t^{3/2}+3t^{7/4}+\cdots$$
\end{example}
An additional interesting connection is realized by the following sum.
\begin{proposition}
    $$\sum_{c\in(0,1] } m(c)= \chi(-F)+(\{Z_K\}, F).$$
\end{proposition}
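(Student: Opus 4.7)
The plan is to exploit the telescoping form of $m(c) = \chi(L(c)) - \chi(L(c - \ev))$ provided by Proposition \ref{lem:chi}(b), which is valid for every $c > 0$.

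Let $0 < c_1 < c_2 < \cdots < c_k \leq 1$ be the finitely many values in $(0, 1]$ at which $D_c \neq 0$; at every other $c \in (0,1]$ one has $L(c) = L(c-\ev)$, so $m(c) = 0$. Because $c \mapsto L(c)$ is piecewise constant and right continuous, $L(c_i - \ev) = L(c_{i-1})$ for $i \geq 2$, $L(c_1 - \ev) = L(0^+)$, and $L(c_k) = L(1)$ (on $[c_k, 1]$ no further jumps occur). Summing telescopically,
\begin{equation*}
\sum_{c \in (0, 1]} m(c) \;=\; \sum_{i=1}^k \bigl(\chi(L(c_i)) - \chi(L(c_i - \ev))\bigr) \;=\; \chi(L(1)) - \chi(L(0^+)).
\end{equation*}

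The one point that requires care — and which I regard as the main obstacle — is verifying that $L(0^+) = \lfloor Z_K \rfloor = L(0)$. This holds because every coefficient $m_i$ of $F$ is strictly positive ($F \in \calS \setminus \{0\}$): for each vertex $i$ and sufficiently small $c > 0$ the floor $\lfloor k_i + cm_i \rfloor$ agrees with $\lfloor k_i \rfloor$ whether or not $k_i$ is an integer. Moreover $L(1) = \lfloor Z_K + F \rfloor = \lfloor Z_K \rfloor + F$ since $F \in L$.

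It then remains to compute $\chi(\lfloor Z_K \rfloor + F) - \chi(\lfloor Z_K \rfloor)$. The definition $\chi(l') = (Z_K - l', l')/2$ yields the standard bilinear relation $\chi(a + b) = \chi(a) + \chi(b) - (a, b)$, hence
\begin{equation*}
\chi(L(1)) - \chi(L(0)) \;=\; \chi(F) - (\lfloor Z_K \rfloor, F).
\end{equation*}
The same definition also gives $\chi(F) - \chi(-F) = (Z_K, F)$, so the right-hand side equals $\chi(-F) + (Z_K - \lfloor Z_K \rfloor, F) = \chi(-F) + (\{Z_K\}, F)$, as asserted.
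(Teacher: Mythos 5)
Your proof is correct and follows essentially the same route as the paper: the paper applies the cumulative formula $\sum_{c'\le c}m(c')=\chi(L(c))+p_g$ (valid for $c\ge 0$ by Proposition \ref{lem:chi}) at $c=1$ and $c=0$ and subtracts, which is exactly the telescoping of Proposition \ref{lem:chi}(b) that you carry out explicitly. Your added care in verifying $L(0^+)=L(0)$ (so that the telescope's left endpoint is indeed $\chi(L(0))$) is a worthwhile detail that the paper passes over silently, and the algebraic manipulation at the end matches.
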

\begin{proof}
 By (\ref{eq:sum}), the left hand side is $\chi(L(1))-\chi(L(0))=\chi(\lfloor Z_K\rfloor+F)-\chi(\lfloor Z_K\rfloor)$.
\end{proof}

\begin{corollary}\label{cor:delta}
Assume that there exists a reduced Cartier divisor $(C,o)\subset (X,o)$, cut out by the function $f:(X,o)\to ({\mathbb C},0)$,  such that ${\rm div}_{\phi,E}(f)=F$, cf. \ref{bek:1.1.2}.
Then
$$\sum_{c\in(0,1] } m(c)= \delta(C,o)+(\{Z_K\}, F),$$
where $\delta(C,o)$ is the delta invariant of the abstract analytic curve germ $(C,o)$.
\end{corollary}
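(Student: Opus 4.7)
The plan is to reduce the corollary to the single identity $\chi(-F)=\delta(C,o)$, since the preceding proposition already gives $\sum_{c\in(0,1]}m(c)=\chi(-F)+(\{Z_K\},F)$. The whole argument will be devoted to this numerical equality between the combinatorial quantity $\chi(-F)$ and the analytic invariant $\delta(C,o)$.

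First I would express $\chi(-F)$ cohomologically. Applying Lemma~\ref{lem:PN} to $l=-F$ with the decomposition $P=0$, $N=F$ yields the exact sequence $0\to \cO_{\tX}\to \cO_{\tX}(F)\to \cO_F(F)\to 0$, and part (b) of the lemma gives
$$h^1(\cO_{\tX})-h^1(\cO_{\tX}(F))=\chi(F)+(F,F)=(Z_K+F,F)/2=-\chi(-F),$$
so $h^1(\cO_{\tX}(F))=p_g+\chi(-F)$. Now, since $\mathrm{div}_{\tX}(\phi^*f)=F+\tilde{C}$, multiplication by $\phi^*f$ produces an $\cO_{\tX}$-linear isomorphism $\cO_{\tX}(-\tilde{C})\xrightarrow{\sim}\cO_{\tX}(F)$, and hence
$$h^1(\tX,\cO_{\tX}(-\tilde{C}))=p_g+\chi(-F).$$

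Next I would identify this same $h^1$ analytically. The strict transform $\tilde{C}=\sqcup_a \tilde{C}_a$ is a disjoint union of smooth disks meeting $E$ transversely, and $\phi|_{\tilde{C}}\colon\tilde{C}\to C$ is precisely the normalization of $C$ (finite, birational onto each branch). Pushing the ideal sheaf sequence $0\to \cO_{\tX}(-\tilde{C})\to \cO_{\tX}\to \iota_*\cO_{\tilde{C}}\to 0$ (with $\iota\colon\tilde{C}\hookrightarrow\tX$) forward along $\phi$, and using the identifications $\phi_*\cO_{\tX}(-\tilde{C})=(f)\subset\cO_{X,o}$ (by normality and Hartogs), $\phi_*\cO_{\tX}=\cO_{X,o}$, $\phi_*\iota_*\cO_{\tilde{C}}=\overline{\cO}_{C,o}$, together with the vanishing $R^1(\phi\circ\iota)_*\cO_{\tilde{C}}=0$ (finiteness of $\phi|_{\tilde{C}}$), the long exact sequence becomes
$$0\to (f)\to \cO_{X,o}\to \overline{\cO}_{C,o}\to H^1(\cO_{\tX}(-\tilde{C}))\to H^1(\cO_{\tX})\to 0.$$
The image of $\cO_{X,o}\to\overline{\cO}_{C,o}$ is $\cO_{C,o}$, whose cokernel is $\delta(C,o)$-dimensional by definition; a finite-dimensional count along the last three terms yields $h^1(\tX,\cO_{\tX}(-\tilde{C}))=\delta(C,o)+p_g$. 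Comparison with the formula from the previous paragraph gives $\chi(-F)=\delta(C,o)$, which combined with the preceding proposition proves the corollary. The only mild subtlety is the bookkeeping across the infinite-dimensional $\cO_{X,o}$-modules in the exact sequence above, but this becomes routine once the image of $\cO_{X,o}$ in $\overline{\cO}_{C,o}$ is recognized as $\cO_{C,o}$.
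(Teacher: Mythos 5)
Your reduction is exactly the paper's: the preceding proposition gives $\sum_{c\in(0,1]}m(c)=\chi(-F)+(\{Z_K\},F)$, so everything hinges on the identity $\chi(-F)=\delta(C,o)$. The paper disposes of this identity by citing \cite[Example 6.3.8]{Nkonyv}, whereas you prove it from scratch — so the overall strategy is identical, but you supply the content of the citation. Your proof is correct: applying Lemma~\ref{lem:PN}(b) with $l=-F$, $P=0$, $N=F$ gives $h^1(\cO_{\tX})-h^1(\cO_{\tX}(F))=\chi(F)+(F,F)=(Z_K+F,F)/2=-\chi(-F)$, hence $h^1(\cO_{\tX}(F))=p_g+\chi(-F)$; the twist by $\phi^*f$ (whose divisor is $F+\tilde C$) converts this to $h^1(\cO_{\tX}(-\tilde C))=p_g+\chi(-F)$; and pushing the ideal-sheaf sequence of $\tilde C$ forward by $\phi$, with $\phi_*\cO_{\tX}(-\tilde C)=(f)$, $\phi_*\cO_{\tX}=\cO_{X,o}$, $\phi_*\iota_*\cO_{\tilde C}=\overline{\cO}_{C,o}$ (normalization) and $R^1(\phi\circ\iota)_*\cO_{\tilde C}=0$ (finiteness), collapses the exact sequence to $0\to\overline{\cO}_{C,o}/\cO_{C,o}\to H^1(\cO_{\tX}(-\tilde C))\to H^1(\cO_{\tX})\to 0$, yielding $h^1(\cO_{\tX}(-\tilde C))=\delta(C,o)+p_g$. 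Comparing the two expressions eliminates $p_g$ and gives the identity. This is a self-contained and clean derivation where the paper only gives a pointer; the only thing you might say explicitly is that the image of $\cO_{X,o}$ in $\overline{\cO}_{C,o}$ really is $\cO_{C,o}$ (surjectivity of $\cO_{X,o}\to\cO_{C,o}$ is immediate because $C$ is Cartier, and $\cO_{C,o}\hookrightarrow\overline{\cO}_{C,o}$ since $C$ is reduced), which you flag but compress into a sentence.
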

\begin{proof}
For the identity $\delta(C,o)=\chi(-F)$  see \cite[Example 6.3.8]{Nkonyv}.
\end{proof}
For a generalization of the formula $\chi(-F)=\delta(C,o)$ when $(X,o)$ is rational and $(C,o)$ not necessarily Cartier, see \cite{CLMN1,CLMN2}
(when an additional correction term appears).

\subsection{The weighted homogenous case}\label{ss:we}\

Our goal is to determine $\sum_{c\in\R}m(c)t^c$ completely and to discuss  some examples.

\bekezdes The affine algebraic (complex) variety $Y\subset \C^n$ is called weighted homogeneous with integral weights $\{w_i\}_{i=1}^n$
if it is stable with respect to the action
of $\C^*$ on $\C^n$, $t*(x_1,\ldots, x_n)=(t^{w_1}x_1,\ldots, t^{w_n}x_n)$.
We assume that (i) each $w_i$ is positive (that is, the action is {\it good}:
the origin is contained in the closure of any orbit), and
(ii) ${\rm gcd}\{w_i\}_i =1$ (that is, the action is {\it effective}: if $t*x=x$ for all $x\in Y$ then $t=1$).
For such a variety $Y$ the affine coordinate ring is $\Z_{\geq 0}$--graded, $R=\oplus _{\ell\geq 0}R_{\ell}$. (Usually the normality of $R$ is not guaranteed.)

A local normal surface singularity $(X,o)$ is called weighted homogeneous if there exists a normal affine surface $Y$ with a good and effective $\C^*$ action, and a singular point $y\in Y$ such that $(X,o)$ and $(Y,y)$ are analytically isomorphic as complex analytic germs. In this case, the local algebra of $(X,o)$ is graded:
we use the same notation   $\cO_{X,o}=\oplus _{\ell\geq 0}R_{\ell}$.

We fix such a singularity and let $\phi$ be its minimal good resolution. The graph is
star-shaped with $\nu$ legs (strings).
Let  $-b_0$ and  $g$ denote the  Euler decoration and the genus of
the central vertex $v_0$. Each  leg for $1\leq j \leq \nu$ has Euler decorations
$-b_{j1}, \ldots, -b_{js_j}$, $b_{ji}\geq 2$, determined by the Hirzebruch continued fraction $\alpha_i/\omega_i=[[b_{j1}, \ldots, b_{js_j}]]$, where
${\rm gcd}(\alpha_j, \omega_j)=1$, $0<\omega_j<\alpha_j$. In the graph,
for each $j$ the central vertex $v_0$ is connected to $v_{j1}$
by an edge. We assume that either $g>0$ or $\nu\geq 3$.
(Otherwise, the graph is a string with all $g_i=0$, hence $(X,o)$ is a quotient singularity, or a  numerically log terminal singularity with all $k_i<1$; this case
is treated in Example \ref{ex:logcan} and in subsection \ref{s:cpoz}.)

The link is a Seifert 3-manifold with Seifert invariants $(b_0,g; \{(\alpha_j,\omega_j)\}_j)$. It has several important numerical invariants (for details see e.g. \cite{pinkham} and \cite{Nkonyv}):

$\bullet$ \  the orbifold Euler number $e:= -b_0+\sum_j \omega_j/\alpha_j$ ($e<0$ by the negative definiteness of $(\,,\,)$)

$\bullet$ \  the orbifold Euler characteristic $\chi:= 2-2g -\sum_j
(\alpha_j-1)/\alpha_j$,

$\bullet$ \  $\mathfrak{r}=\chi/e$, where $-\mathfrak{r}$ is called the log discrepancy of $(X,o)$,

$\bullet$ \  the order of $H=L'/L={\rm Tors}(H_1(M,\Z))$, it equals
$|H|=|e|\cdot \prod_j\alpha_j$,

$\bullet$ \  the order of $[E^*_{v_0}]$ (of the generic $S^1$ orbit) in $L'/L$, it equals $\mathfrak{o}=|e|\alpha$,
where $\alpha:={\rm lcm}_j\{\alpha_j\}$,

$\bullet$ \  $k_0$, the $E_{v_0}$--coefficient of $Z_K$, it equals $k_0=1+
\mathfrak{r}$,

$\bullet$ \  the $E_{v_0}$--coefficient of $E^*_{v_0}$, it equals
$(\prod_j\alpha_j)/|H|$.

Associated with  $\phi$ we choose $F:= \mathfrak{o}E^*_{v_0}$.
The rational cycle
$E^*_{v_0}$ is associated with one arrow attached to the central vertex,
and $\mathfrak{o}$ is the smallest integer such that $\mathfrak{o} E^*_{v_0}$ is integral.

Note that
$m_{v_0}$,   the  $E_{v_0}$--multiplicity of $F$,  is
$\mathfrak{o}\cdot \prod_j\alpha_j/|H|=\alpha$.

For any cycle $l$ we denote by  $l_0$ its  $E_{v_0}$--coefficient.

\begin{theorem} \label{th:Weighted}
    Let $(X,o)$ and $F$ be as before. Then

    (a)  If \ $\fI(c,F)\subsetneq\fI(c-\ev,F)$ then $k_0+cm_{v_0} \in\Z$, and
      $\fI(c-\ev,F)/\fI(c,F))\simeq  R_{L(c)_0-1}$,

(b)
    $$\sum_{c\in \R}m(c)t^c = \sum_{\ell\geq 0}
  \  \dim(R_{\ell})\, t^{  (\ell+1-k_0)/\alpha}=
  \sum_{\ell\geq 0}
  \  \dim(R_{\ell})\, t^{  (\ell-\mathfrak{r})/\alpha}
  .
 $$
 Thus, $\sum_cm(c)t^c $ follows by a simple way
 from the Poincar\'e series $P(t):= \sum_{\ell\geq 0} \, \dim(R_\ell)\, t^\ell$ of
 $(X,o)$.
\end{theorem}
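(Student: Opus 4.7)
The strategy is to exploit the $\C^*$-action on the weighted homogeneous $(X,o)$ and its lift to $\tX$. This action extends to $\tX$, preserves each $E_i$ set-wise and fixes $E_{v_0}$ pointwise; hence the cycles $Z_K$, $F=\mathfrak{o}E^*_{v_0}$ and $L(c)$ are $\C^*$-invariant, $\cO_{\tX}(-L(c))$ is canonically $\C^*$-linearised, and $\fI(c,F)=\phi_*\cO_{\tX}(-L(c))$ is a graded ideal of $\cO_{X,o}=\bigoplus_\ell R_\ell$. Moreover the grading coincides with the divisorial filtration at the central vertex: for a nonzero $f\in R_\ell$ work in local coordinates $(u,v)$ near a generic point $q\in E_{v_0}$ with $E_{v_0}=\{v=0\}$; effectiveness of the $\C^*$-action on $X$ together with the pointwise fixation of $E_{v_0}$ forces the local model $t\ast(u,v)=(u,tv)$. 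Expanding $\phi^*f=\sum c_{ab}u^a v^b$ and matching weights in $\phi^*f(u,tv)=t^\ell\phi^*f(u,v)$ kills all terms with $b\neq\ell$, yielding $\phi^*f=v^\ell g(u)$ with $g\not\equiv 0$; in particular $v_{v_0}(\phi^*f)=\ell$. Writing $\cF_n:=\{f\in\cO_{X,o}\mid v_{v_0}(\phi^*f)\geq n\}$, this gives $\cF_n=\bigoplus_{\ell\geq n}R_\ell$.

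The technical heart is the identity
\[
\fI(c,F)=\cF_{L(c)_0},
\]
asserting that membership in $\fI(c,F)$ is detected by the valuation at $v_0$ alone. The inclusion $\subseteq$ is tautological; for $\supseteq$, gradedness reduces the problem to showing that every homogeneous $f\in R_\ell$ with $\ell\geq L(c)_0$ satisfies $v_{E_i}(\phi^*f)\geq L(c)_i$ for all leg-vertices $i$. Since $\mathrm{div}(\phi^*f)$ is numerically trivial and the strict transform intersects each $E_j$ non-negatively, the exceptional part $D:=\mathrm{div}_{\mathrm{exc}}(\phi^*f)$ is antinef with $E_{v_0}$-coefficient $\ell$, and so dominates the minimal antinef integer cycle $A_\ell$ having $E_{v_0}$-coefficient $\ell$. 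I would then verify (i) the monotonicity $A_{\ell+1}\geq A_\ell$, and (ii) that $A_{L(c)_0}$ coincides with the antinef hull of $L(c)$; together these give $D\geq A_\ell\geq A_{L(c)_0}\geq L(c)$, proving the identity. Granting it, $(\fI(c,F))_\ell=R_\ell$ for $\ell\geq L(c)_0$ and $0$ otherwise, so the quotient $\fI(c-\ev,F)/\fI(c,F)$ vanishes unless $L(c)_0-L(c-\ev)_0=1$, i.e.\ unless $k_0+cm_{v_0}\in\Z$; in that case the quotient equals $R_{L(c)_0-1}$, which is (a). For (b), parametrise the jumps by $\ell\geq 0$ via $c_\ell:=(\ell+1-k_0)/\alpha=(\ell-\mathfrak{r})/\alpha$ (using $k_0=1+\mathfrak{r}$); then $L(c_\ell)_0=\ell+1$ and $m(c_\ell)=\dim R_\ell$, and summation yields the stated Poincar\'e identity.

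The principal difficulty lies in verifying (i) and (ii) in the previous paragraph: both are combinatorial statements about the star-shaped graph that must be extracted from the arithmetic of $Z_K$, $E^*_{v_0}$ and the leg continued fractions. In particular, the role of the fractional part $\{Z_K+cF\}$ in matching $L(c)$ with its antinef hull is the most delicate input, and one must simultaneously control the leg-vertices where the naive componentwise inequality $k_i\leq k_0 m_i/\alpha$ need not hold.
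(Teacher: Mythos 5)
You have correctly identified the skeleton of the proof: reduce $\fI(c,F)$ to the divisorial filtration $\calF_n$ at the central vertex, use the coincidence of that filtration with the $\C^*$-grading, and then the whole theorem hinges on showing that an antinef integer cycle $s$ with $s_0\ge L(c)_0$ automatically satisfies $s\ge L(c)$. This is exactly the content of the paper's Proposition \ref{prop:tech}, and it is where your argument stops. Your steps (i) and (ii) are precisely this lemma in disguise --- indeed (i) is easy (if $A_\ell$ and $A_{\ell+1}$ both exist, $A_\ell\wedge A_{\ell+1}$ is antinef with $E_{v_0}$-coefficient $\ell$, so $A_\ell\le A_\ell\wedge A_{\ell+1}\le A_{\ell+1}$, using that $\calS$ is closed under componentwise minimum), but (ii) is equivalent to $A_{L(c)_0}\ge L(c)$, which is the nontrivial implication of Proposition \ref{prop:tech}, and you explicitly flag it as unresolved.

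Here is the missing argument, following the paper. Take $s\in\calS$ with $s_0\ge L(c)_0$. In a star-shaped graph the cycle $Z_K+cF-E$ pairs to $0$ with every vertex that is neither the node nor an end of a leg, and pairs to $+1$ with each end vertex (this is a direct computation from the adjunction relations and the fact that $F$ is a rational multiple of $E^*_{v_0}$, hence orthogonal to all $E_v$, $v\ne v_0$). Therefore $(s - Z_K - cF + E, E_v)\le 0$ for every $v\ne v_0$. At the node one has $(s-Z_K-cF+E)_0 \ge r := (\Delta_c)_0 + 1 > 0$, so subtracting the precisely calibrated positive rational multiple $r'E^*_{v_0}$ with $(r'E^*_{v_0})_0 = r$ produces a cycle
$$C := s - Z_K - cF + E - r' E^*_{v_0}$$
that is supported on the legs and satisfies $(C,E_v)\le 0$ for every leg vertex $v$. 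Since the legs form a disjoint union of negative-definite strings, $C$ lies in their Lipman cone, hence $C\ge 0$. Unwinding this gives $s \ge L(c) - E + r' E^*_{v_0}$; since every $E_{v_0}^*$-coefficient is strictly positive and $s$ is integral, the $-E$ defect is absorbed and $s\ge L(c)$. Without this computation --- in particular the role of the $+1$ intersections at the end vertices, which is what makes $C$ land in the legs' Lipman cone --- your chain $D\ge A_\ell\ge A_{L(c)_0}\ge L(c)$ has no starting point, so the proposal as written has a genuine gap in the place you yourself identified.
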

Theorem \ref{th:Weighted} follows from the next technical result.

\begin{proposition}\label{prop:tech} For any $c\in\R$ and $s\in\calS$,
 $L(c)\leq s$ if and only if $L(c)_0\leq s_0$.
\end{proposition}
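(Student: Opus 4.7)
The forward implication is immediate since $L(c)\leq s$ forces inequality at every coefficient, in particular at $v_0$. For the converse, fix $s\in\calS$ with $s_0\geq L(c)_0$, set $t:=s-L(c)$, and aim to show $t_{v_{jk}}\geq 0$ for every leg vertex. My plan is to argue one leg at a time, reducing the claim to a discrete maximum principle on the chain of leg vertices.

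Fix a leg $j$ of length $s_j$. Let $x_0:=k_0+c\alpha=(Z_K+cF)_0$ and $\theta_{jk}:=\{(Z_K+cF)_{v_{jk}}\}\in[0,1)$ for $k=1,\ldots,s_j$; extend by $\theta_{j,0}:=\{x_0\}$, $\theta_{j,s_j+1}:=0$, $t_{j,0}:=t_0$ and $t_{j,s_j+1}:=0$. The key arithmetic input is that for $k\geq 1$ the equality $(F,E_{v_{jk}})=\mathfrak{o}(E^*_{v_0},E_{v_{jk}})=0$ implies $(Z_K+cF,E_{v_{jk}})=-b_{jk}+2$, independently of $c$. Subtracting integer parts from this intersection identity gives
\[
\theta_{j,k-1}+\theta_{j,k+1}-b_{jk}\theta_{jk}=-b_{jk}+2-(L(c),E_{v_{jk}}),\qquad k=1,\ldots,s_j.
\]
Now define $\tilde t_{jk}:=t_{jk}+1-\theta_{jk}$. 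A direct calculation combining the displayed identity with $(t+L(c),E_{v_{jk}})=(s,E_{v_{jk}})$ yields
\[
\tilde t_{j,k-1}+\tilde t_{j,k+1}-b_{jk}\tilde t_{jk}=(s,E_{v_{jk}})\leq 0,\qquad k=1,\ldots,s_j.
\]
At the endpoints, $\tilde t_{j,0}=s_0+1-x_0>0$ (since $s_0,L(c)_0\in\Z$ and $s_0\geq L(c)_0=\lfloor x_0\rfloor$ force $s_0+1>x_0$) and $\tilde t_{j,s_j+1}=1>0$.

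I would then finish by the discrete maximum principle: a sequence on a chain satisfying the above ``concavity'' inequality with $b_{jk}\geq 2$ and strictly positive boundary values is strictly positive in the interior. A putative interior minimum with $\tilde t_{k^*}\leq 0$ would force $b_{k^*}\tilde t_{k^*}\geq \tilde t_{k^*-1}+\tilde t_{k^*+1}\geq 2\tilde t_{k^*}$ to be equality, so $\tilde t$ propagates the minimum along a $(-2)$-chain either to the boundary---contradicting strict positivity there---or across a vertex with $b_{k^*}>2$, yielding a strict inequality that again contradicts superharmonicity. Hence $\tilde t_{jk}>0$, giving $t_{jk}>\theta_{jk}-1\geq -1$, and integrality of $t_{jk}$ forces $t_{jk}\geq 0$. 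The main obstacle here is less conceptual than finding the right substitution: the choice $\tilde t=t+1-\theta$ is what simultaneously absorbs the $(L(c),E_{v_{jk}})$ correction and produces strict positivity at both endpoints, while the role of $F=\mathfrak{o}E^*_{v_0}$ being supported only on the central vertex (intersection-wise) is exactly what keeps the $c$-dependence of the recursion out of the leg interiors.
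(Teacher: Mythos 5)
Your proof is correct, and it is essentially the paper's argument unwound explicitly. Observe that your substitution $\tilde t_{jk}=t_{jk}+1-\theta_{jk}$ is precisely the $v_{jk}$-coefficient of the cycle $s-Z_K-cF+E$, which (up to the normalizing term $r'E^*_{v_0}$ at the central vertex) is the auxiliary cycle $C$ the paper introduces; and your discrete maximum principle on a chain with $b_{jk}\geq 2$ is exactly the Lipman-cone property of a negative-definite string that the paper invokes to conclude $C\geq 0$. Both arguments rest on the same two pillars: since $F$ is proportional to $E^*_{v_0}$, one has $(cF,E_{v_{jk}})=0$ for every leg vertex, so the superharmonicity/Lipman inequalities on the legs are $c$-independent and are forced by $s\in\calS$; and the strict positivity margin $\tilde t>0$ together with the integrality of $s-L(c)$ upgrades the fractional inequality to $t\geq 0$. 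The paper's version is more compact (one auxiliary cycle, one appeal to the Lipman cone), while yours makes the chain recursion and the source of the boundary positivity at $k=0$ and $k=s_j+1$ fully explicit -- which is arguably a clearer write-up, since the paper's phrase ``supported on the legs'' glosses over the need to check $C_0\geq 0$ separately.
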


\begin{proof}[Proof of Theorem \ref{th:Weighted} via Proposition \ref{prop:tech}]
  We consider the filtration $\{\calF (\ell)\}_{\ell\geq 0}$ of $\cO_{X,o}$
   associated with the divisor $E_{v_0}$, i.e. $\calF(\ell):= \{f\in \cO_{X,o}\,:\, {\rm div}_{\phi,E}(f)\geq \ell E_{v_0}\,\}$.

   We claim that $\fI(c,F)=\calF(L(c)_0)$.

   Indeed, if $f\in\fI(c,F)=\phi_*\cO_{\tX}(-L(c))$ then ${\rm div}_{\phi,E}(f)
   \geq L(c)\geq L(c)_0E_{v_0}$. Also,  if $f\in\calF(L(c)_0)$ then
${\rm div}_{\phi,E}(f)\in\calS$ and ${\rm div}_{\phi,E}(f)\geq L(c)_0E_{v_0}$, hence
$f\in\phi_*\cO_{\tX}(-L(c))$  too by the Proposition.

But, for  homogeneous singularities the $\{\calF(\ell)\}_\ell$
filtration coincides with the filtration induced by the $\C^*$--action:
$\calF(\ell)=\oplus_{\ell'\geq \ell}R_{\ell'}$, see e.g.
\cite[Example 8.6.13]{Nkonyv}.
Therefore, $\fI(c,F)=\oplus_{\ell\geq L(c)_0}R_{\ell}$.
 If \ $\fI(c,F)\subsetneq\fI(c-\ev,F)$ then necessarily  $k_0+cm_{v_0} \in\Z$, and
      $\fI(c-\ev,F)=\oplus_{\ell\geq L(c)_0-1}R_{\ell}$.
\end{proof}

\begin{proof}[Proof of Proposition \ref{prop:tech}]
Fix $s\in \calS$ and assume that $s_0\geq L(c)_0$. Then $(Z_K+cF-E,E_v)=0$ for any vertex except for the node and the end vertices, and it equals
 1  for any end vertex. Therefore, $(s-Z_K-cF+E, E_v)\leq 0$  for any
 $v\not=v_0$. On the other hand, $(s-Z_K-cF+E)_0=s_0-(Z_K+cF)_0+1\geq r$,
 where $r:= (\Delta_c)_0+1>0$.

 Recall that $(E^*_{v_0})_0=\prod_j\alpha_j/|H|$, and  set
 $r':=(r|H|)/\prod_j\alpha_j>0$. Then $(r'E^*_{v_0})_0=r$. This shows that
 the cycle
 $$C:= s-Z_K-cF+E-r'E^*_{v_0}$$
 is supported on the legs, and $(C,E_v)\leq 0$ for any vertex $v$ of the legs.
 Thus, $C$ is in the Lipman cone of the legs, hence
 $C\geq 0$. This reads as
 $$s\geq Z_K+cF-E+r'E^*_{v_0}\geq L(c)-E+r'E^*_{v_0}.$$
 Finally  use the facts that all the coefficients of $E^*_{v_0}$ are strict positive and
 $s$ is an integral cycle.
\end{proof}
\begin{example} (i)
Assume that $M$ is an integral homology sphere. Then it is the Seifert 3-manifold
$\Sigma(\alpha_1,\ldots, \alpha_\nu)$. Then, by \cite{EN,Neumann} $(X,o)$ is a Brieskorn-Hamm complete intersection, the $\{\alpha_j\}_j$'s are pairwise relative prime, $\alpha=\prod_j\alpha_j$, and
its Poincar\'e series is
$$P(t)=\frac{(1-t^\alpha )^{\nu-2}}{\prod_j (1-t^{\alpha/\alpha_j})}.$$
(ii) If $M$ is a rational homology sphere (equivalently, $g=0$)  then again,  the Poincar\'e series
can be determined from the Seifert invariants by the Dolgachev-Pinkham-Demazure formula \cite{pinkham}
$$P(t)=\sum_{\ell\geq 0}\, \max\{0, 1+\ell b_0-\sum_j \lceil \ell\omega_j/\alpha_j\rceil \}\cdot t^\ell. $$
(iii) If $(X,o)$ is a hyperurface singularity given by an isolated quasihomogeneous germ
with weights  $(w_1,w_2,w_3)$ and degree $d$ then
$$P(t)=\frac{1-t^d}{\prod_i(1-t^{w_i})}.$$
\end{example}

\begin{example}\label{ex:Laufer2}  (a)
    Consider the weighted homogeneous hypersurface germs (i) and (ii) from Example \ref{ex:Laufer}.
In that Example
we considered  $F=E=E^*_3$, which does not apply to the discussion from this subsection \ref{ss:we}.
Hence, let us take at this time $F=E_{v_o}^*=Z_K$. Then by  a very similar  computation
$\sum_{c\leq 0}m_{(i)}(c)t^c=t^{-2/3}+t^{-1/3}+t^{0}$ in case (i),
and  $\sum_{c\leq 0}m_{(ii)}(c)t^c=t^{-2/3}+t^{0}$ in case (ii). On the other hand, for $c>0$
(by subsection \ref{s:cpoz})
the values $m(c)$ should agree (since the germs  share the same  graph).
Hence $\sum_{c}m_{(i)}(c)t^c-\sum_{c}m_{(ii)}(c)t^c =t^{-1/3}$.

Let us compare this fact with the statement of Theorem \ref{th:Weighted}.
In case (i), $z^2=y^3+x^{18}$ has weights (1,6,9) and degree 18.
    In case (ii), $z^2=x(y^4+x^6)$, the weights are (2,3,7) and  the degree is 14.
    Hence the  Poincar\'e series are
    $$P_{(i)}(t)=\frac{1-t^{18}}{(1-t)(1-t^6)(1-t^9)} \ \ \ \mbox{and} \ \ \
P_{(ii)}(t)= \frac{1-t^{14}}{(1-t^2)(1-t^3)(1-t^7)}.$$
Since the `substitution' is $t^\ell \mapsto t^{(\ell-2)/3}$,  this is indeed
in agreement with the above computation since (rather interestingly) $P_{(i)}(t)-P_{(ii)}(t)=t$ (and
the above substitution realizes $t\mapsto  t^{-1/3}$).

(b) The above computation, valid for $F:=E_{v_0}^*=E_1^*=Z_K $ can be compared with the computation from
 Example \ref{ex:Laufer}. In fact, let us take $F':=3E_3^*=3E$. Then the multiplicities in the cycles
 $F=Z_K$ and $F'=3E$ of the central vertex agree. Then, surprisingly, the computation from
  Example \ref{ex:Laufer} and from above show that the expression $\sum_{c\leq 0}m(c)t^c$,
  computed individually for any choice (i)--(iii) of the analytic type, is the same for $F$ and $F'$
  (though $F'$ is not a rational multiple of $E_{v_0}^*$).

This might suggests that
if we drop the assumption that $F$ is a rational multiple of $E^*_{v_0}$, then
in the computation of $\sum_{c}m(c)t^c$, associated with $F$,
 only the
the central vertex multiplicity of $F$ counts. Well,
 for $c>0$ this is not the case. For $c>0$, though  $\sum_{c>o} m(c)t^c$
  is independent of the choice of the analytic type (i)---(iii), it turns out that it depends on the choice of
  $F$ and $F'$:
  $$\sum_{c>0}m(c)t^c=\left\{\begin{array}{ll}
 \sum_{n>0}\left\lceil \frac{n}{3} \right\rceil t^{n/3} & \mbox{for $F$}, \\
  \sum_{n>0}nt^{n/3} & \mbox{for $F'$}.
  \end{array}\right. $$

\end{example}

\begin{remark}\label{rem:pinkham} (a)
Let us consider again the formula from Theorem \ref{th:Weighted}{\it (b)}, which gives the identification
$m((\ell-\mathfrak{r})/\alpha)=\dim(R_\ell)$.  Though each homogeneous component  $R_{\ell}$ of the local algebra of $(X,o)$ a priori is analytical, in subsection \ref{s:cpoz} proved that
$m(c)$ for $c>0$ is topological. Hence, via our formula $\dim(R_{\ell})$ is topological for any $\ell>\mathfrak{r}$. This can be seen in the theory of weighted homogeneous singularities as well.
Indeed, for  $\ell>\mathfrak{r}$,  the $h^1$ of a certain line bundle is vanishing, whose $h^0$ is exactly $R_{\ell}$
(for details see e.g. \cite{pinkham} or \cite[Remark 5.1.29]{Nkonyv}).

(b) On the other hand, the formula $(\dag)$ $\sum _{c\leq 0}m(c)=\chi(\lfloor Z_K\rfloor) +p_g$
from Example \ref{ex:sum} at this generality
is new even from the point of view of the Dolgachev-Pinkham-Demazure theory of
weighted homogeneous singularities (reinterpreted via Theorem \ref{th:Weighted}{\it (b)}).
Indeed, in the Gorenstein case one has  $\chi(\lfloor Z_K\rfloor)=0$, and  it is known  that
by duality $\sum _{\ell\leq\mathfrak{r}}\dim(R_\ell)t^\ell$ corresponds to a finite Poincar\'e
series of $H^1(\tX,\cO_{\tX})$  (graded by  the $\C^*$--action), see e.g. \cite[Example 6.3.30 {\it (d)}]{Nkonyv}.
Hence, $\sum _{\ell\leq \mathfrak{r}} \dim R_\ell=p_g$.
However, for non-Gorenstein germ, the `broken duality'
formula $(\dag)$, even at this numerical level $\sum _{\ell\leq \mathfrak{r}} \dim R_\ell=p_g +\chi(\lfloor Z_K\rfloor) $  was  unknown (at least for the authors).
\end{remark}

 \section{Jumping multiplicities from the multiplicities of $F$}

\subsection{The definition of the multiplicities $\{m_i\}_{i\in\mathcal{W}}$}

\bekezdes\label{bek:multext}
 Let us fix the resolution graph $\Gamma$ with vertices $\mathcal{V}$ and the cycle $F\in\calS\setminus\{0\}$
as above. We write $F=\sum_{i\in\mathcal V} m_iE_i$. We complete $\Gamma$ and its decorations (self-intersection numbers $e_i=E_i^2$, genera $g_i$  and multiplicities
$m_i$, all indexed by $\mathcal{V}$) to a `virtual embedded resolution graph'.
We complete the vertices of $\Gamma$ with $-(E,F)$ arrowhead vertices, each
vertex $i\in\mathcal{V}$ will support $-(E_i,F)$ arrows, and each of them
will be decorated by multiplicity 1. Let $\mathcal{A}$ denote the set of
arrows, $\mathcal{W}:=\mathcal{V}\cup \mathcal {A}$. Hence each element of $\mathcal{W}$
has a multiplicity, $\{m_i\}_{i\in\mathcal{V}}$ is given by $F$, all others are $m_a=1$ for
all $a\in\mathcal{A}$. (If we fix the topological type of $(X,o)$ by $\Gamma$, then for a convenient analytic type of $(X,o)$ there exists a function $f:(X,o)\to (\mathbb{C},0)$
such that ${\rm div}_{\phi,E}(f)=F$ and the embedded resolution graph associated with $\phi$ and $f$ is this completed graph with arrows and multiplicities
$\{m_v\}_{v\in \mathcal {W}}$.)

For any $v\in \mathcal{V}$ let $\mathcal{W}_v$ be the set of all (usual and arrowhead)
vertices adjacent with $v$, and $\mathcal{V}_v=\mathcal{W}_v\cap \mathcal{V}$,
   $\mathcal{A}_v=\mathcal{W}_v\cap \mathcal{A}$.

Finally, we consider  certain  rational numbers $\{\beta_v\}_{v\in \mathcal{V}}$,
 $\beta_v\in[0,1)$. It is convenient to set also $\beta_a:=0$ for any
$a\in\mathcal{A}$. (For a geometric interpretation see Theorem \ref{th:main_mult} and subsection \ref{ss:MRHS}.)

\subsection{The definition of $TSp_{[0,1)}$}

\bekezdes
Next we define a combinatorial invariant $TSp_{[0,1)}=TSp_{[0,1)}(\{\beta_v\}_{v\in\cV})$
associated with the completed $\Gamma$, multiplicity system $\{m_v\}_{v\in \mathcal{W}}$ and rational numbers
$\{\beta_v\}_{v\in\mathcal{V}}$ ($\beta_v\in[0,1)$). Motivated by the connections from section \ref{s:Hodge},
we will call it `topological spectrum'.

For any  $v\in \cV$   and $s\in \{0, 1, \ldots , m_v-1\}$
we set
    \begin{align*}
     R^s_v(\beta) &=\sum_{w\in \cW_v} \{ -\beta_w+\frac{m_w}{m_v}(s+\beta_v) \}
     \ \text{and }\alpha^s_v =\frac{s+\beta_v}{m_v}.
    \end{align*}

Then we define
$$ TSp_{[0,1),v}^\beta=\sum_{s=0}^{m_v-1}(R^s_v(\beta)+g_v-1)t^{\alpha^s_v}.$$
Let $\cE$ be the set of edges of $\Gamma$. If $e\in\cE$ connects $v,w\in\cV$, then
we set   $m_e:={\rm gcd}(m_v,m_w)$.
Also, we  consider the following system of equations in $\gamma_e$
\begin{equation}\label{cases}\begin{cases}
\beta_v=\{m_v\gamma_e/m_e\} \\
\beta_w=\{m_w\gamma_e/m_e\}.
\end{cases}\end{equation}
Either it has a solution $\gamma_e\in [0,1)$ or it does not.
If it has a solution then that solution is unique in  $[0,1)$.
  We also introduce
    $$\gamma_e^s=\begin{cases}
        (\gamma_e+s)/m_e \ &\text{if } \gamma_e\in [0,1) \text{ exists.} \\
        -\infty \ &\text{otherwise.}
    \end{cases}$$
    and define $TSp_{[0,1),e}^\beta=\sum_{s=0}^{m_e-1}t^{\gamma_e^s}$, where $t^{-\infty}=0$.

Finally let
$$TSp_{[0,1)}^\beta:=\sum_{v\in \cV}TSp_{[0,1),v}^\beta+\sum_{e\in \cE}TSp_{[0,1),e}^\beta.$$

\begin{theorem} \label{th:main_mult}
Fix the resolution with graph $\Gamma$ and $F\in \calS\setminus \{0\}$.
Consider the complete graph with multiplicities $\{m_w\}_{w\in\cW}$ and
$\{\beta_w\}_{w\in \mathcal{W}}$  defined as
$\beta_v:=\{-k_v\}$ for $v\in \cV$ and $\beta_a=0$ for $a\in\cA$.

Then the jumping multiplicities associated with jumping numbers $c\in[0,1)$ are
determined as follows:
    $$\sum_{c\in [0,1)}m^*(c)t^c=-\mathfrak{h}^0t^0+ \sum_{c\in [0,1)}m(c)t^c=TSp_{[0,1)}^\beta.$$
\end{theorem}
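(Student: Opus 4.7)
The plan is to compare both sides coefficient-by-coefficient in the formal variable $t$. By Corollary~\ref{cor:m0}(a),(a') the invariant $m^*(c)$ admits the uniform expression
$$m^*(c) = (D_c, \Delta_c) - \chi(D_c) - c(D_c, F) \qquad (c\in[0,1)),$$
so the task is to show that each summand in this Riemann--Roch-type formula matches a specific piece of $TSp_{[0,1)}^\beta$.

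First I would translate the support condition for $D_c$ into the language of $\alpha_v^s$. Since $D_c$ is supported on those $v\in\mathcal{V}$ with $k_v+cm_v\in\mathbb{Z}$, and $\beta_v=\{-k_v\}$, this is equivalent to $m_v c\equiv\beta_v\pmod{\mathbb{Z}}$, which (for $c\in[0,1)$) happens exactly when $c=\alpha_v^s=(s+\beta_v)/m_v$ for some $s\in\{0,\ldots,m_v-1\}$. Thus the exponents appearing in $TSp_{[0,1),v}^\beta$ are in bijection with the values of $c\in[0,1)$ for which $v\in V(D_c)$. For an edge $e$ joining $v,w\in\mathcal{V}$, writing $c=(\gamma_e+s)/m_e$ and using $m_e=\gcd(m_v,m_w)$ shows that the system (\ref{cases}) admits a solution $\gamma_e\in[0,1)$ with $\gamma_e^s=c$ precisely when both endpoints $v,w$ lie in $V(D_c)$; so the edge contribution adds exactly one to the coefficient of $t^c$ for each edge of $\Gamma$ whose endpoints both lie in $V(D_c)$.

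Next I would unpack $R_v^s(\beta)$ for $v\in V(D_c)$ at $c=\alpha_v^s$. Because $-\beta_w$ differs from $k_w$ by an integer, $\{-\beta_w+m_w c\}=\{k_w+m_w c\}=(\Delta_c)_w$ (which is $0$ precisely when $w\in V(D_c)$); for arrows $a\in\mathcal{A}_v$ one has $\beta_a=0$, $m_a=1$, so the contribution is $\{c\}=c$. Since the number of arrows at $v$ is $-(E_v,F)$, summing over $\mathcal{W}_v=\mathcal{V}_v\cup\mathcal{A}_v$ yields
$$R_v^s(\beta) = \sum_{w\in\mathcal{V}_v}(\Delta_c)_w - c(E_v,F) = (E_v,\Delta_c)-c(E_v,F),$$
where the last equality uses that $(\Delta_c)_v=0$ for $v\in V(D_c)$ (so no self-intersection term contributes). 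Summing over $v\in V(D_c)$ produces $(D_c,\Delta_c)-c(D_c,F)$.

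Finally I would combine the vertex and edge contributions. Adding $\sum_{v\in V(D_c)}(g_v-1)$ from the $g_v-1$ in $TSp_{[0,1),v}^\beta$ and $e(D_c):=\#\{e\in\mathcal{E}:\text{both endpoints in }V(D_c)\}$ from the edge sum, and invoking the standard topological Euler-characteristic identity for a reduced normal crossing divisor,
$$\chi(D_c) = |V(D_c)|-\sum_{v\in V(D_c)}g_v - e(D_c),$$
one gets
$$\sum_{v\in V(D_c)}\bigl(R_v^{s_v}(\beta)+g_v-1\bigr)+e(D_c)=(D_c,\Delta_c)-c(D_c,F)-\chi(D_c)=m^*(c),$$
which is the coefficient identity. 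The only delicate steps are the verification that the lattice-theoretic condition on $\gamma_e$ is exactly the simultaneous-membership condition on $v,w\in V(D_c)$ (a short gcd computation), and the consistent bookkeeping at the boundary case $c=0$, where the $\mathfrak{h}^0$ correction is absorbed into the definition of $m^*(0)$ so that Corollary~\ref{cor:m0}(a') gives the same formula as (a). These are the main technical points; everything else is a direct accounting of fractional parts.
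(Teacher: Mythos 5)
Your proof is correct and takes essentially the same route as the paper's own argument: both proofs match coefficients of $t^c$ by identifying $V(D_c)$ with the exponents $\alpha^s_v$ via $\beta_v=\{-k_v\}$, handle the edge sum through the criterion of Lemma~\ref{lem:system}, use the Euler-characteristic identity $\chi(D_c)=|V(D_c)|-\sum g_v-e(D_c)$, and reduce $R^s_v$ to $(E_v,\Delta_c)-c(E_v,F)$ before invoking Corollary~\ref{cor:m0}. The only cosmetic difference is direction of travel (you start from the $m^*(c)$ formula and recover $TSp^\beta_{[0,1)}$, the paper expands $TSp^\beta_{[0,1)}$ and recognizes $m^*(c)$), so there is nothing to add.
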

In the proof we will need the following lemma.

\begin{lemma} \label{lem:system}
For a fixed $c\in [0,1)$ and an edge $e\in \cE$ which connects $v$ and $w$,
the following two facts are equivalent.

(1) The system (\ref{cases}) admits a solution $\gamma_e\in[0,1)$ and there exists
$s_e\in\{0, 1,\dots , m_e-1\}$ such that $(\gamma_e+s_e)/m_e=c$,

(2) there exist
$s_v\in\{0, 1,\dots , m_v-1\}$   and $s_w\in\{0, 1,\dots , m_w-1\}$ such that $(\beta_v+s_v)/m_v=(\beta_w+s_w)/m_w=c$.

The pairs $(\gamma_e,s_e)$ and $(s_v,s_w)$ determine each other in a unique way.
\end{lemma}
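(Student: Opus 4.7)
The plan is to reduce both directions to a single observation about floor and fractional-part operations, with the divisibility $m_e\mid m_v$ and $m_e\mid m_w$ (which is the content of $m_e=\gcd(m_v,m_w)$) doing all the nontrivial work. The key remark is that for any positive integer $M$ and any $c\in[0,1)$, the representation $c=(\beta+s)/M$ with $\beta\in[0,1)$ and $s\in\{0,\dots,M-1\}$ is unique, namely $s=\lfloor Mc\rfloor$ and $\beta=\{Mc\}$, since $Mc\in[0,M)$ admits a unique decomposition into integer and fractional parts. The same principle applied with $M=m_v, m_w, m_e$ will make both the existence and the uniqueness claims transparent.

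For (1)$\Rightarrow$(2), I would start from $(\gamma_e+s_e)/m_e=c$ and multiply through by $m_v$: this gives $m_v\gamma_e/m_e=m_v c-m_v s_e/m_e$, and because $m_e\mid m_v$ the subtracted term is an integer. Taking fractional parts yields $\{m_v\gamma_e/m_e\}=\{m_v c\}$, and then (\ref{cases}) forces $\beta_v=\{m_v c\}$. Symmetrically $\beta_w=\{m_w c\}$. Defining $s_v:=\lfloor m_v c\rfloor$ and $s_w:=\lfloor m_w c\rfloor$, which lie in the correct ranges because $c\in[0,1)$, one immediately gets $(\beta_v+s_v)/m_v=(\beta_w+s_w)/m_w=c$.

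For the converse (2)$\Rightarrow$(1), I would simply \emph{define} $s_e:=\lfloor m_e c\rfloor$ and $\gamma_e:=\{m_e c\}$, so that $\gamma_e\in[0,1)$, $s_e\in\{0,\dots,m_e-1\}$, and $(\gamma_e+s_e)/m_e=c$ hold by construction. The two equations of (\ref{cases}) then follow by running the previous paragraph backwards: rewriting $\beta_v=\{m_v c\}$ (read from (2)) as $\{m_v\gamma_e/m_e\}$ using the divisibility $m_e\mid m_v$, and similarly for $w$. The uniqueness of the pairs $(\gamma_e,s_e)$ and $(s_v,s_w)$ follows directly from the uniqueness observation of the first paragraph, since in each case the pair is forced to be the integer/fractional decomposition of $m_e c$, $m_v c$, and $m_w c$ respectively.

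I do not foresee a genuine obstacle; the lemma is entirely an exercise in tracking $\lfloor\cdot\rfloor$ and $\{\cdot\}$ through the chain $c\leftrightarrow\gamma_e\leftrightarrow(\beta_v,\beta_w)$, and the only subtle point is remembering that $m_e=\gcd(m_v,m_w)$ is used purely as the assertion $m_e\mid m_v$ and $m_e\mid m_w$, which is exactly what permits all fractional remainders to vanish in the right places.
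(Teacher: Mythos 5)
Your proof is correct and follows essentially the same route as the paper's: both directions hinge on identifying $(\beta_v,s_v)$, $(\beta_w,s_w)$, and $(\gamma_e,s_e)$ with the unique integer/fractional decompositions of $m_v c$, $m_w c$, and $m_e c$, with the divisibility $m_e\mid m_v$, $m_e\mid m_w$ being exactly what makes the fractional parts match across the chain. Your write-up makes the role of the divisibility and the uniqueness of the decompositions a bit more explicit than the paper's terse version, but the argument is the same.
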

\begin{proof}
    If {\it (1)} holds then
    $s_v:=\lfloor (\gamma_e+s_e)m_v/m_e\rfloor $ and   $s_w:=\lfloor (\gamma_e+s_e)m_w/m_e\rfloor $ satisfy {\it (2)}. On the other hand, if {\it (2)} is satisfied by $(s_v,s_w)$, then $\beta_v=\{cm_v\}$ and $\beta_w=\{cm_w\}$.
    Hence if we write $cm_e=\gamma_e+s_e$ (i.e. $\gamma_e:=\{cm_e\}$) then
    $\gamma_e\in[0,1)$ is a solution of  the system  and $(\gamma_e+s_e)/m_e=c$ too.
\end{proof}

\begin{proof}[Proof of Theorem \ref{th:main_mult}]
Using the definitions of $\{\beta_w\}_{w\in \cW}$ we have
$\alpha^s_v=\frac{s+\{-k_v\}}{m_v}$, and $R^s_v(\beta)=\sum_{w\in \cW_v}\{k_w+\frac{m_w}{m_v}(s+\{-k_v\})\}$.
   In the proof below we just write $R^s_v$ for $R^s_v(\beta)$.

    Let us rewrite $TSp_{[0,1)}^\beta$ as
  $ \sum_{c\in [0,1)}\tau(c)\,t^c$.
    We want to show that $\tau(c)=m^*(c)$.

    For a fixed $c\in [0,1)$ we have
    \begin{equation}
   \tau(c)=\sum_{v\in \cV, \exists s \ \alpha^s_v=c} (R^s_v+g_v-1)+\sum_{e\in \cE, \exists s \ \gamma^s_e=c } 1.  \end{equation}
Then we  apply  Lemma \ref{lem:system} and the fact that
$-\beta_v+cm_v\in{\mathbb Z}$ if and only if $v$ is in the support of $D_c$
(and we use the notation $\cV(D_c)$ and $\cE(D_c)$ for the vertices and edges identified by the support of $D_c$). Thus,
    \begin{align}
        \tau(c) &=\sum_{v\in \cV, \exists s \ \alpha^s_v=c} (R^s_v+g_v-1)+\sum_{e\in \cE, \exists s \ \gamma^s_e=c } 1 \\
        &= \sum_{v\in \cV(D_c)} (R^{cm_v-\{-k_v\}}_v+g_v-1) + \#\{e\in \cE(D_c)\} \\
        &=  \sum_{v\in \cV(D_c)} R^{cm_v-\{-k_v\}}_v +\sum_{v\in \cV(D_c)} g_v + \#\{e\in \cE(D_c)\} - \#\{v\in \cV( D_c)\} \\
        &= \sum_{v\in \cV(D_c)} R^{cm_v-\{-k_v\}}_v
        -\chi(D_c).
    \end{align}
    Furthermore, for any $v\in \cV(D_c)$,
    \begin{align}
        R^{cm_v-\{-k_v\}}_v &=\sum_{w\in \cW_v} \{k_w+\frac{m_w}{m_v}(cm_v-\{-k_v\}+\{-k_v\})\} \\
       &=\sum_{u\in \cV_v} \{ k_u+m_u c \}+ \
       \sum_{a\in \cA_v} c \\
        &=(\{Z_K+cF\}, E_v)- (cF,E_v).
    \end{align}
Hence, via  Corollary \ref{cor:m0}{\it (a)}
    \begin{align}
        \tau(c)&=
       \big( \sum_{v\in \cV(D_c)}(\Delta_c, E_v)- (cF,E_v)\, \big)
        -\chi(D_c)\\
            &=(\D_c,D_c)-(cF,D_c)-\chi(D_c))=m^*(c)
    \end{align}

\vspace*{-9mm}

\end{proof}

\section{Jumping multiplicities and Hodge spectrum}\label{s:Hodge}

\subsection{The Milnor package and Hodge spectrum of an analytic germ}\label{ss:Milnor}

\bekezdes Let $(X,o)$ be a normal surface singularity  and $f:(X,o)\to (\mathbb {C},0)$
an analytic germ which defines an isolated singularity $(C,o):=(f^{-1}(0),o)\subset (X,o)$.
 Let $F_o$ be the Milnor fiber of $f$. It is a 1-dimensional Stein space, hence its
 cohomology $H^n:=H^n(F_o,\C)$ can be  nonzero only for $n=0,1$. Moreover, $F_o$ is connected, hence
 $H^0=\C$. Let $T^*$ be the monodromy operator $T^*:H^*\to H^*$.
 In fact, $T^0={\rm id}_{\C}$. The operator $T^n$ defines a generalized eigenspace decomposition
 $H^n=\oplus_{\lambda}H^n_\lambda$. The corresponding dimensions are codified in the monodromy zeta function
 $$\zeta(t)=\prod_{\lambda} (t-\lambda)^{\dim\, H^1_\lambda-\dim\, H^0_\lambda}=
 \det(t\cdot Id-T^1)/\det(t\cdot Id-T^0). $$
 If $\phi$ is a log embedded resolution of $(C,o)\subset (X,o)$ (and we use the notations of
 \ref{bek:multext})
 then by A'Campo's Theorem \cite{AC}
 $$\zeta(t)=\prod _{v\in\cV} (1-t^{m_v})^{2g_v-2+\#(\cW_v)}.$$
In particular, $\dim H^0_1=1$ and $\dim H^1_1=\#\cA+2g+2h-1$.

By \cite{steenbrink,saito,saito2} $H^1$ carries a mixed Hodge structure, set $H^{pq}:=Gr^W_{p+q}Gr^p_F H^1$.
The semisimple part of $T^1$ acts on each $H^{pq}$, let $H^{pq}_\lambda$ be the eigenspace with dimension
$h^{pq}_\lambda$.
Then the spectrum of $H^1$ is defined as $Sp(H^1):=\sum_{p,q, \lfloor \alpha\rfloor=p}h^{pq}_{e^{2\pi i\alpha}}t^\alpha $.
One defines similarly the spectrum $Sp(H^0)$ of $H^0$ too, but in fact it is $t^0$.
Hence, the `total' (Euler characterisitc type)  spectrum of $f$, as lifting of $\zeta$, is
$$Sp(f,t):=Sp(H^1)-Sp(H^0)=-t^0+\sum_{p,q, \lfloor\alpha\rfloor=p}\ h^{pq}_{e^{2\pi i\alpha}}\ t^\alpha\in \Z[\Q].$$
We write $Sp(f,t)=\sum _{\alpha}\sigma(\alpha)t^\alpha.$
It turns out that $\sigma(\alpha )=0$ whenever $\alpha\not\in[0,2)$,
$\sigma(0)=g+h-1$ and $\sigma(1)=\#\cA+g+h-1$.  Moreover, for any $\alpha\in(0,1)$,  one has the symmetry
$\sigma(\alpha)=\sigma(2-\alpha)$. Hence $Sp(f,t)$ is completely determined by
$Sp_{[0,1]}(f,t):=\sum _{\alpha\in[0,1]}\sigma(\alpha)t^\alpha $.

In the next theorem we fix an embedded resolution and its decorated graph (as in \ref{bek:multext}).

\begin{theorem}\label{th:NS}\cite{SSS,NS}
 $$Sp_{[0,1]}(f,t)=(g+h-1)t^0+(\#\cA+g+h-1)t^1 + \sum_{ e\in \cE} \
 \sum_{0<s<m_e}t^{\frac{s}{m_e}}+\sum_{v\in \cV} \
 \sum_{0<s<m_v}(R^s_v+g_v-1)t^{\frac{s}{m_v}},$$
where $R^s_v:= \sum_{w\in \cW_v}\{sm_w/m_v\}$.
\end{theorem}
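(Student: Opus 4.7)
My plan is to compute $Sp_{[0,1]}(f,t)$ via Saito's mixed Hodge module formalism applied to the log embedded resolution $\phi:\widetilde X\to X$, identifying the mixed Hodge structure on $H^\ast(F_o,\C)$ with $\mathbb H^\ast(\phi^{-1}(o),\psi_{f\circ\phi}\,\C^H_{\widetilde X})$. Since $f\circ\phi$ has normal crossings with multiplicities $\{m_w\}_{w\in\mathcal W}$, the nearby cycle complex decomposes locally into $\lambda$-eigensheaves under the semisimple monodromy, and each eigensheaf can be analysed stratum by stratum on $\phi^{-1}(o)\cup\widetilde C$.

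Next I would stratify this total exceptional divisor into the open curves $E_v^\circ:=E_v\setminus\bigcup_{w\in\mathcal W_v}E_w$ for $v\in\mathcal V$, the double points $E_v\cap E_w$ for $e\in\mathcal E$, and the transverse intersections $E_v\cap\widetilde C_a$ for $a\in\mathcal A$, and run a Mayer--Vietoris spectral sequence. Over $E_v^\circ$, for each $s\in\{1,\dots,m_v-1\}$ the $e^{2\pi is/m_v}$-eigensheaf is a rank-one local system on the cyclic $m_v$-cover; its $H^1$ has dimension $R_v^s+g_v-1$, coming from $\chi(E_v^\circ)=2-2g_v-\#\mathcal W_v$ together with the boundary correction $R_v^s$ that records how the local system extends across the punctures with monodromies given by the fractional parts $\{sm_w/m_v\}$. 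This eigenspace sits in a Hodge bidegree whose spectral exponent is $s/m_v$, giving the vertex summand $(R_v^s+g_v-1)\,t^{s/m_v}$. At each edge $e$ with $m_e=\gcd(m_v,m_w)$, the local normal-crossings model contributes the additional $m_e-1$ terms $t^{s/m_e}$ for $0<s<m_e$, coming from the classical two-variable Steenbrink computation.

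For the endpoints $\alpha\in\{0,1\}$ I would separately treat the $\lambda=1$ monodromy eigenspace, using the long exact sequence linking $H^\ast(\phi^{-1}(o))$, the link cohomology, and $H^\ast(F_o)_1$. A'Campo's formula yields $\dim H^1(F_o)_1=\#\cA+2g+2h-1$, and the weight filtration splits this into a weight-$0$ piece of dimension $g+h-1$ (which combines with the $-t^0$ from $H^0$ to give $(g+h-1)t^0$) and a weight-$2$ piece of dimension $\#\cA+g+h-1$ giving $(\#\cA+g+h-1)t^1$. The main obstacle will be precisely this last step: the impure part of the mixed Hodge structure at the boundary requires delicate control of the weight filtration --- in particular, distinguishing the contribution of the graph cycles (counted by $h$) from that of the higher-genus components (counted by $g$), and apportioning them correctly between the weight-$0$ and weight-$2$ summands. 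The interior stratum computations, by contrast, are essentially local and reduce via unramified cyclic descent to Steenbrink's classical formula for plane curves, so there I expect no conceptual difficulty.
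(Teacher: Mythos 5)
The paper does not prove Theorem~\ref{th:NS}: it is quoted from \cite{SSS,NS} (the citation appears in the theorem header), so there is no in-paper argument to compare against. Your proposal does outline the same broad strategy those references employ --- nearby cycles via Saito's mixed Hodge module formalism, stratification of the total transform into open exceptional curves, double points and arrowhead intersections, and reduction at the interior strata to Steenbrink's computation through rank-one local systems --- and the overall plan is sound.

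There are, however, two concrete imprecisions that would have to be repaired before the sketch becomes a proof. First, over $E_v^\circ$ you assert that $H^1$ of the $\lambda$-eigensheaf has dimension $R^s_v+g_v-1$, and attribute this to $\chi(E_v^\circ)=2-2g_v-\#\cW_v$ plus a ``boundary correction''. But for a nontrivial rank-one local system $\cL_\lambda$ on $E_v^\circ$ the Euler characteristic gives $\dim H^1(E_v^\circ,\cL_\lambda)=2g_v-2+\#\cW_v$, which is the \emph{total} eigenvalue-$\lambda$ contribution; it is subsequently distributed between the two spectral exponents $s/m_v$ and $2-s/m_v$ representing $\lambda$. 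The number $R^s_v+g_v-1$ is only the piece attached to $\alpha=s/m_v$; obtaining it requires a genuinely Hodge-theoretic step --- Riemann--Roch on the Deligne extension of $\cL_\lambda$ to the compact curve $E_v$, where the residues $\{sm_w/m_v\}$ enter as degree shifts --- not a topological Euler characteristic count. Indeed $(R^s_v+g_v-1)+(R^{m_v-s}_v+g_v-1)=2g_v-2+\#\cW_v$ generically, so your stated dimension is exactly half of what you claim it equals. Second, in the $\lambda=1$ discussion your ``weight-$0$'' and ``weight-$2$'' dimensions $g+h-1$ and $\#\cA+g+h-1$ sum to $\dim H^1_1-1$ rather than $\dim H^1_1$; the identity $\sigma(0)=g+h-1$ is obtained only \emph{after} subtracting $Sp(H^0)=t^0$, so the low-exponent part of $H^1_1$ actually has dimension $g+h$. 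Moreover ``weight~$0$'' versus ``weight~$2$'' is not the correct partition: the spectrum sorts eigenvalue-$1$ classes by $\lfloor\alpha\rfloor=p$, hence it splits the weight-$1$ piece of $H^1_1$ into its $(0,1)$ and $(1,0)$ halves, grouping $h^{00}_1+h^{01}_1$ at $t^0$ and $h^{10}_1+h^{11}_1$ at $t^1$. With these corrections the remainder of your programme is reasonable and mirrors the arguments of the cited sources.
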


Note that the right hand side is exactly $TSp_{[0,1]}^{\beta=0}$.
Hence, Theorems \ref{th:main_mult} and \ref{th:NS} combined give
\begin{corollary}\label{cor:NS} Assume that $(X,o)$ is a numerically Gorenstein singularity, and
$f:(X,o)\to (\C,0)$ is an analytic germ with $(f^{-1}(0),o)$ isolated singularity.
Fix an embedded  good resolution $\phi$ and set $F:={\rm div}_{\phi,E}(f)\in\calS\setminus \{0\}$.
Then
    \begin{equation}\label{eq:extb}
       \sum_{c\in [0,1]}m^*(c)t^c=-\mathfrak{h}^0t^0+ \sum_{c\in [0,1]}m(c)t^c=TSp_{[0,1]}^{\beta=0}=
       Sp_{[0,1]}(f,t).
\end{equation}
\end{corollary}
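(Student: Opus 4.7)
The statement is a three-term chain of equalities, each of which has a clear source, so the proof proposal is essentially a bookkeeping plan. I would separate the three equalities and deal with them in turn.

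First, the identity $\sum_{c\in[0,1]}m^*(c)t^c=-\mathfrak{h}^0 t^0+\sum_{c\in[0,1]}m(c)t^c$ is a tautology from the definition $m^*(0)=m(0)-\mathfrak{h}^0$ and $m^*(c)=m(c)$ for $c>0$, so nothing to do.

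For the middle equality, the plan is to apply Theorem \ref{th:main_mult} and then patch the endpoint $c=1$. Numerically Gorenstein means $Z_K\in L$, so every $k_v\in\Z$ and the chosen parameters $\beta_v=\{-k_v\}$ all vanish. Hence Theorem \ref{th:main_mult} gives
\[
\sum_{c\in[0,1)}m^*(c)t^c=TSp^{\beta=0}_{[0,1)}.
\]
It remains to add the $c=1$ contribution. Since $k_v\in\Z$, one checks $\lfloor Z_K-\epsilon F\rfloor=Z_K-E$ for $0<\epsilon\ll 1$, so $D_0=E$ and $(D_0,F)=(E,F)=-\#\cA$ (because $(E_v,F)=-\#\cA_v$ by the defining property of the arrows). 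Then Corollary \ref{cor:m0}\emph{(b$'$)} gives
\[
m(1)=m(0)-\mathfrak{h}^0-(D_0,F)=m^*(0)+\#\cA.
\]
Adding $m(1)t^1$ to both sides produces the desired equality once we declare $TSp^{\beta=0}_{[0,1]}:=TSp^{\beta=0}_{[0,1)}+m(1)t^1$; the content is that this extra term has the predicted value.

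For the third equality, I would expand $TSp^{\beta=0}_{[0,1]}$ and compare term by term with Theorem \ref{th:NS}. With $\beta=0$, the system (\ref{cases}) is solved by $\gamma_e=0$, so $\gamma_e^s=s/m_e$; and $R^s_v(0)=\sum_{w\in\cW_v}\{sm_w/m_v\}$ coincides with the $R^s_v$ appearing in Theorem \ref{th:NS}. The $s>0$ parts therefore match summand by summand. The two $s=0$ contributions must be collected into a single $t^0$ coefficient: vertices yield $\sum_{v\in\cV}(g_v-1)=g-\#\cV$ and edges yield $\#\cE$, totalling $g+\#\cE-\#\cV=g+h-1$, which is exactly the $t^0$ coefficient in Theorem \ref{th:NS}. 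Finally, the $t^1$ coefficient coming from $m(1)$ was computed to be $m^*(0)+\#\cA=(g+h-1)+\#\cA$, matching Theorem \ref{th:NS} on the nose.

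The only real obstacle is careful handling of the two endpoints: noting that numerical Gorensteinness exactly kills the fractional parts $\{-k_v\}$ (so one may set $\beta=0$ throughout), and that the same hypothesis makes $D_0=E$, which is what converts the combinatorial constant $-(D_0,F)=\#\cA$ into Steenbrink's $\#\cA$ in the $t^1$ coefficient. Once these two observations are in place, the corollary reduces to juxtaposing Theorem \ref{th:main_mult}, Corollary \ref{cor:m0}\emph{(b$'$)}, and Theorem \ref{th:NS}.
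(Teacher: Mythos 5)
Your proof is correct and takes essentially the same route as the paper, which disposes of the Corollary in one line ("Note that the right hand side is exactly $TSp_{[0,1]}^{\beta=0}$. Hence, Theorems \ref{th:main_mult} and \ref{th:NS} combined give [the Corollary]"). What you add, and what is genuinely worth adding, is the endpoint bookkeeping at $c=1$. The paper defines $TSp_{[0,1)}^\beta$ on the half-open interval only and never spells out what the $t^1$ coefficient of $TSp_{[0,1]}^\beta$ is supposed to be; you correctly patch this via Corollary \ref{cor:m0}\emph{(b$'$)}, using that numerical Gorensteinness forces $\beta_v=\{-k_v\}=0$, $D_0=E$ and $(E,F)=-\#\cA$, and check that the resulting $t^1$ coefficient $m^*(0)+\#\cA=(g+h-1)+\#\cA$ agrees with Steenbrink's $\sigma(1)$ in Theorem \ref{th:NS}. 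Your matching of the $t^0$ coefficient (the $s=0$ vertex and edge terms summing to $g-\#\cV+\#\cE=g+h-1$) and of the $s>0$ terms is also correct. The only cosmetic quibble is that "declaring" $TSp_{[0,1]}^{\beta=0}:=TSp_{[0,1)}^{\beta=0}+m(1)t^1$ reads as if $TSp_{[0,1]}$ depended on analytic input; it does not, since $m(1)$ is combinatorial by Proposition \ref{lem:chi}\emph{(b)}, and your subsequent computation shows the added term equals the purely graph-theoretic quantity $\#\cA+g+h-1$. No gaps.
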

This identity was proved in case $(X,o)=(\C^2,0)$ by Budur \cite{budur}.

Though both invariants $J_{[0,1]}(t)$ and $Sp_{[0,1]}(f,t)$ are a priori analytical, and they might depend on the analytic structure of $(X,o)$, it turns out that this is not the case, and both of them equal the combinatorial expression $TSp_{[0,1]}^{\beta=0}$.

Recall that if $(X,o)$ is smooth then in the above formulae
$m^*(0)=-1$, $\mathfrak{h}^0=1$, $m(0)=0$, $m(1)=\#\cA-1$ and  $g=h=0$. The $-t^0$ term  corresponds to
$Sp(H^0)$.

\bekezdes
If $(X,o)$ is not numerically Gorenstein then the identity
 $\sum_{c\in [0,1]}m^*(c)t^c= Sp_{[0,1]}(f,t)$ is not true anymore. It fails already for rational
 germs $(X,o)$.

\begin{example}\label{ex:rat3}
Consider the rational graph $\Gamma$  as in Example \ref{ex:rat}. Assume that $f$ is a germ whose strict transform corresponds to the arrow of the graph, and $F={\rm div}_{\phi,E}(f)$. Then
$J_{[0,1]}(t)= 2t^{3/4}+t$, cf. \ref{ex:rat2}. But $Sp_{[0,1]}(f,t)=-t^0+t^{1/4}+t^{2/4}+t^{3/4}$, hence they are different.
\end{example}

The above example shows that we have to modify the above spectrum expression
(which equals $TSp_{[0,1]}^{\beta=0}$)
if we wish to recover
$TSp_{[0,1]}^{\beta}$, where $\beta$ is determined by $\{Z_K\}$ as in Theorem \ref{th:main_mult}.
This is the subject of the next subsections. We have to replace the cohomology of the Milnor fiber
$H^*(F_o,\C)$ by a cohomology with coefficient in a mixed Hodge module. That is, we will replace the
constant pure  Hodge module  with a different Hodge module of $(X,o)$.
The needed mixed Hodge module will be determined by a variation of Hodge structures on $X\setminus \{0\}$, in fact, by a finite order character of $\pi_1(X\setminus\{o\})$.
This can be identified in many different ways, one conceptual way is via a (regular) covering
of $X\setminus \{o\}$. If the link is a rational homology sphere then we can rely on the topological canonical covering, if $(X,o)$ is $\Q$--Gorenstein then we can rely on the
(analytic) canonical covering (see the constructions below).

\subsection{Mixed Hodge modules on $(X,o)$}\label{ss:MHM}
\bekezdes Let $MHM(X,o)$ be the category of {\it mixed Hodge modules}  on $(X,o)$ \cite{saito}.
If $f:(X,o)\to (\C,0)$ defines an isolated singularity $(C,o)\subset (X,o)$ then
there exists a nearby cycle functor $\psi_f:MHM(X,o)\to MHM(C,o)$ \cite{saito}.
It has a functor automorphism $T_s$ of finite order (the semisimple part of the monodromy action).
The nearby cycle functor  is compatible with the corresponding functor at the level of perverse shaves
\cite{deligne} by the forgetful functor $rat:MHM(X,o)\to
Perv(X,o)$.

A mixed Hodge module ${\mathcal M}$ is called {\it smooth} if $rat(\mathcal{M}) $ is a local system \cite{saito}. A module  $\mathcal {M}$ is called  {\it pure  of weight $n$} (or polarized Hodge module of weight $n$) if $Gr^W_i\mathcal{M}=0$ for $i\not=n$. The category of smooth polarized Hodge modules is equivalent with the category of (admissible) variations of polarized  Hodge structures.

The relation with the Milnor fibers from the previous subsection \ref{ss:Milnor} is the following.
Let $i_o:\{o\}\to (C,o)$ be the inclusion. Then $H^*(i_o^*\psi_f \C_{X,o})=H^*(F_o,\C)$.

The point is that if we replace the constant Hodge module by an arbitrary ${\mathcal M}\in MHM(X,o)$,
then each $H^k(i_o^*\psi_f\,{\mathcal M})$ carries a mixed Hodge structure (together with a finite monodromy action), hence we can define its spectrum $Sp(H^k(i_o^*\psi_f\,{\mathcal M}))$ similarly as we defined
$Sp(H^k)$ in the previous subsection. Moreover, we set $Sp_\psi(f,{\mathcal M},t) $
as $\sum_k(-1)^k Sp(H^k(i_o^*\psi_f\,{\mathcal M}))$.

Here we will construct ${\mathcal M}$ as $j_*{\mathbb V}$, where $j:X\setminus\{o\}\to X$ is the inclusion  and ${\mathbb V}$ is a polarized variation of Hodge structure on $X\setminus \{o\}$ with
 finite cyclic rank one  underlying representation.

\subsection{The construction of ${\mathcal M}$ when the link of $(X,o)$ is a rational homology sphere}\label{ss:MRHS}

\bekezdes Assume that the link $M$ of $(X,o)$ is a rational homology sphere. This means that in any good resolution
$H_1(M, \Z)=L'/L$. Let $\widehat{H}$  be the Pontrjagin dual of $H$, the group of characters
$Hom(H, S^1)$. It is known that there exists   an isomorphism $\theta:H\to \widehat{H}$,
$[l']\mapsto e^{2\pi i(l', \cdot)}$, where $[l']$ denotes the class of $l'\in L'$ in $L'/L$.
In particular, any $l'$ (mod $L$)
gives a character of $H$, hence by the composition
$\pi_1(M)\to H_1(M,\Z)\to S^1$ a finite order character of $\pi_1(M)=\pi_1(X\setminus\{o\})$.
The corresponding flat bundle  can be lifted to the category of polarized variation of Hodge structures on
$X\setminus \{o\}$, this will be denoted by ${\mathbb V}_{[l']}$.

 ${\mathbb V}_{[l']}$ can also be constructed as follows. Let $N$ be the order of
 $[l']$ in $H$. Then the above representation $\rho_{[l']}:\pi_1(X\setminus \{o\})\to H_1(M,\Z)\to \Z_N$
 determines a $\Z_N$--Galois $o$--ramified covering $cov:(X_{[l']},o)\to (X,o)$, where $(X_{[l']},o)$
  is a normal surface singularity, and the restriction  $X_{[l']}\setminus\{o\}\to X\setminus \{o\}$
  is the regular covering associated with $\rho_{[l']}$. This follows from Stein theorem \cite{stein}.
  Then $cov_*\C_{X_{[l']}\setminus \{o\}}=\oplus _{i=1}^N {\mathbb V}^i $, where each ${\mathbb V}^i$  is a variation of Hodge structures  whose underlying monodromy representation is $\rho_{[l']}^i$.
 Our variation we searching for is ${\mathbb V}_{[l']}:={\mathbb V}^1$.

\begin{theorem}\label{th:NS2}\cite{NS} Assume that the link of $(X,o)$ is a $\Q HS^3$. Let $f:(X,o)\to (\C,0)$ be an isolated singularity.  Consider the variation ${\mathbb V}_{[l']}$,  and define the rational numbers  $\beta_v\in[0,1)$  by
$$\rho_{[l']}(-E^*_v)=e^{2\pi i (l', -E^*_v)}=e^{2\pi i \beta_v}\ \ \mbox{for any $v\in \cV$}.$$
For any $a\in\cA$ set $\beta_a=0$.
Define ${\mathcal M}_{[l']}:= j_* {\mathbb V}_{[l']}$, where $j:X\setminus \{o\}\to X$ is the inclusion.  Then
 $$Sp_{[0,1]}(f,{\mathcal M}_{[l']}, t)=TSp_{[0,1]}^\beta.$$
\end{theorem}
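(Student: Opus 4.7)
The statement is the rank-one twisted analog of Theorem \ref{th:NS}, and the proof plan is to reduce the spectrum computation to the embedded resolution $\phi:\tX\to X$ and evaluate it stratum by stratum, applying the general Hodge-module spectrum formula of \cite{NS}.

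First I would record the geometric input on $\tX$. Since $\phi$ is a log embedded resolution of $(C,o)\subset(X,o)$ with $C=f^{-1}(0)$, the composition $g:=f\circ\phi$ has simple normal crossing zero divisor $\sum_v m_v E_v+\sum_a C_a$. Pulling back, $\phi^*\mathcal M_{[l']}$ agrees on $\tX\setminus E$ with the variation $\phi^*\mathbb V_{[l']}$, whose monodromy along a small loop linking $E_v$ inside a transverse disk equals $\rho_{[l']}(-E_v^*)=e^{2\pi i\beta_v}$, and along a loop around an arrowhead $C_a$ is trivial (hence $\beta_a=0$). Proper base change for nearby cycles gives
\[
i_o^*\psi_f\mathcal M_{[l']}\;\cong\;R\Gamma\bigl(E,\psi_g\,\phi^*\mathcal M_{[l']}\bigr),
\]
so $Sp_\psi(f,\mathcal M_{[l']},t)$ equals the alternating sum of spectra of the fibers of $\psi_g\,\phi^*\mathcal M_{[l']}$ along the natural stratification of $E\cup\tC$.

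Next I would compute each stratum's contribution. Stratify $E\cup\tC$ into: the open parts $E_v^\circ=E_v\setminus(\cup_{w\neq v}E_w\cup\tC)$, the double points $p_e\in E_v\cap E_w$ (edges $e\in\mathcal E$), and the arrowhead points $E_{i(a)}\cap C_a$. On $E_v^\circ$ the local model of $g$ is $u^{m_v}$ with transversal local-system monodromy $e^{2\pi i\beta_v}$: the pointwise nearby-cycle fiber is $\C^{m_v}$ cyclically permuted, so only eigenvalues $e^{2\pi i(s+\beta_v)/m_v}$ ($s=0,\ldots,m_v-1$) occur, and the Hodge/weight data on these permute according to $\beta_w$ through the tangential components, producing per such eigenvalue a Hodge piece of size $R^s_v(\beta)$; multiplying by the (orbifold) Euler characteristic $2-2g_v-\#\mathcal W_v$ of $E_v^\circ$ yields exactly $\sum_{s=0}^{m_v-1}(R^s_v(\beta)+g_v-1)t^{\alpha^s_v}$. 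At a double point $E_v\cap E_w$, the local normal form $u^{m_v}v^{m_w}$ with bi-graded monodromy data $(\beta_v,\beta_w)$ gives a nonzero nearby-cycle contribution precisely when the compatibility system \eqref{cases} admits a solution $\gamma_e\in[0,1)$; a direct Steenbrink-type computation on this toric model then produces exactly $TSp_{[0,1),e}^\beta$. At arrowhead points, the trivial twist ($\beta_a=0$) means their contributions are already absorbed into the $w=a$ summand of $R^s_v(\beta)$.

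Finally I would assemble: summing the stratum contributions yields $\sum_{v}TSp_{[0,1),v}^\beta+\sum_{e}TSp_{[0,1),e}^\beta=TSp_{[0,1)}^\beta$. Extending from the half-open interval $[0,1)$ to the closed interval $[0,1]$ requires tracking the boundary eigenvalue-$1$ piece: in the untwisted case (Theorem \ref{th:NS}) one gets a separate $t^0$ and $t^1$ term from $H^0$ and from $\dim H^1_1$; in the twisted case the variation is nontrivial near $o$ iff $[l']\neq 0$, and the local invariant cohomology $H^0(i_o^*\psi_f\mathcal M_{[l']})$ vanishes unless $\beta\equiv 0$, so the contribution is already encoded in the $\alpha^s_v$ with $(s+\beta_v)/m_v\in\{0,1\}$ counted by $R^s_v(\beta)$.

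The main obstacle is the local Hodge calculation at the double points: the Steenbrink nearby cycle of $u^{m_v}v^{m_w}$ is classical, but we must verify that twisting by a rank-one local system with monodromies $(e^{2\pi i\beta_v},e^{2\pi i\beta_w})$ around the two axes replaces the eigenvalue selector $s/m_e$ by $\gamma_e^s=(\gamma_e+s)/m_e$ and preserves the pure weight-$1$ spectrum structure, so that each such point contributes precisely $t^{\gamma_e^s}$. Once this local Hodge-theoretic lemma (which is the content of \cite{NS} in the rank-one abelian case) is established, the global assembly is formal from the stratification above.
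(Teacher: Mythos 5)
Theorem \ref{th:NS2} is not proved in the paper at all: it is stated as a direct citation to \cite{NS}, so there is no internal proof to compare against. Your outline correctly identifies the Steenbrink-style strategy that underlies \cite{NS} --- pull the Hodge module back via the log resolution $\phi$, invoke compatibility of nearby cycles with proper pushforward, stratify $E\cup\tC$, and sum the local contributions --- but as an independent argument it is circular, because you explicitly defer the decisive step (``the local Hodge-theoretic lemma, which is the content of \cite{NS} in the rank-one abelian case'') back to the very reference from which the theorem is being quoted.

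Two places where the outline also glosses over genuine work. (i) The base-change identity $i_o^*\psi_f\mathcal M_{[l']}\cong R\Gamma(E,\psi_g\phi^*\mathcal M_{[l']})$ is not automatic: $\mathcal M_{[l']}=j_*\mathbb V_{[l']}$ is a direct-image Hodge module, not the intersection-cohomology extension $j_{!*}\mathbb V_{[l']}$, and $R\phi_*$ of a na\"ive pullback need not reproduce it without decomposition-theorem corrections supported at $o$; those corrections affect exactly the $t^0$ and $t^1$ boundary terms that you wave at in the passage from $[0,1)$ to $[0,1]$, which is precisely where the statement is sensitive to the choice of extension. (ii) The assertion that the twisted nearby cycle at a double point of $u^{m_v}v^{m_w}$ contributes $t^{\gamma_e^s}$ when the compatibility system (\ref{cases}) is solvable and nothing otherwise, together with the weight/Hodge data producing $R^s_v(\beta)$ on the open strata, is the lemma that has to be proved, not assumed. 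As written, the outline is a plausible reading guide to \cite{NS} rather than a proof of the stated theorem.
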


Hence, Theorems \ref{th:main_mult} and \ref{th:NS} combined give
\begin{corollary}\label{cor:NS2} Consider the situation of Theorem \ref{th:NS2}.
Let us  fix a good embedded resolution $\phi$ and set $F:={\rm div}_{\phi,E}(f)$.
Furthermore, define $\beta_v=\{-k_v\}$ for $v\in\cV$ and $\beta_a=0$ for $a\in\cA$. Then
    \begin{equation}\label{eq:ext2}
       \sum_{c\in [0,1]}m^*(c)t^c=-\mathfrak{h}^0t^0+ \sum_{c\in [0,1]}m(c)t^c=TSp_{[0,1]}^{\beta}=
       Sp_{[0,1]}(f,{\mathcal  M}_{[-Z_K]}, t).
\end{equation}
\end{corollary}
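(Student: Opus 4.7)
\smallskip
\noindent\textbf{Proof proposal.} The plan is to chain together the two previously established theorems, Theorem~\ref{th:main_mult} and Theorem~\ref{th:NS2}, and to verify that they involve the same parameters $\{\beta_v\}_{v\in\cV}$. The first identity $\sum_{c\in[0,1]}m^*(c)t^c = -\mathfrak{h}^0 t^0 + \sum_{c\in[0,1]}m(c)t^c$ is immediate from the definition of $m^*$ (namely $m^*(0) = m(0) - \mathfrak{h}^0$ and $m^*(c) = m(c)$ for $c > 0$), so I would dispose of it first.

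Next I would invoke Theorem~\ref{th:main_mult}, which with the prescribed choice $\beta_v = \{-k_v\}$, $\beta_a = 0$ already gives the combinatorial identity
$$\sum_{c\in [0,1)} m^*(c)\,t^c \;=\; TSp_{[0,1)}^{\beta}.$$
The extension of the sum from $[0,1)$ to $[0,1]$ proceeds exactly as in the proof of Corollary~\ref{cor:NS}: the contribution at $c = 1$ comes from the formal index $s = m_v$ on both sides, and matches term by term. I would simply cite that earlier argument rather than redo the bookkeeping.

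The remaining task is the equality $TSp_{[0,1]}^{\beta} = Sp_{[0,1]}(f, {\mathcal M}_{[-Z_K]}, t)$, which I would obtain from Theorem~\ref{th:NS2} applied to the class $[l'] = [-Z_K] \in H$. That theorem produces $TSp_{[0,1]}^{\beta'}$, where $\beta'_v \in [0,1)$ is characterized by $e^{2\pi i \beta'_v} = \rho_{[-Z_K]}(-E_v^*) = e^{2\pi i (-Z_K, -E_v^*)}$, and $\beta'_a = 0$ by convention. Using the defining property $(E_w, E_v^*) = -\delta_{vw}$ of the dual basis, one computes
$$(-Z_K,\,-E_v^*) \;=\; (Z_K, E_v^*) \;=\; -k_v,$$
so that $e^{2\pi i \beta'_v} = e^{-2\pi i k_v}$, whence $\beta'_v = \{-k_v\}$. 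Thus $\beta' = \beta$ and the two combinatorial expressions coincide.

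The only genuinely substantive step is this last character calculation: it is what singles out the class $[-Z_K]$ (rather than $[Z_K]$ or any other cycle) as the one producing the Hodge-theoretic incarnation of the fractional parts $\{-k_v\}$ that govern the jumping divisors. Once this identification of the $\beta$'s is in place, the corollary is a formal consequence of the two cited theorems; I do not anticipate any further obstacle, beyond the $c=1$ bookkeeping already handled in the numerically Gorenstein case.
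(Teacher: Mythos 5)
Your proof is correct and follows the paper's own approach, which is simply to concatenate Theorem~\ref{th:main_mult} with Theorem~\ref{th:NS2}. Your explicit character computation $(-Z_K,-E_v^*)=(Z_K,E_v^*)=-k_v$, verifying that the choice $[l']=[-Z_K]$ (and not $[Z_K]$, which would give $\{k_v\}$) is what produces the required $\beta_v=\{-k_v\}$, is the one nontrivial verification and you handle it correctly; the paper omits this detail. One small remark: the paper never writes out a proof for Corollary~\ref{cor:NS}, and it also never formally defines $TSp_{[0,1]}$ (only $TSp_{[0,1)}$), so your reference to ``the bookkeeping already handled in the proof of Corollary~\ref{cor:NS}'' points to something that isn't written down. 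The $c=1$ term is a genuine (if routine) extension -- it has to match $m(1)=(D_1,\Delta_1)-\chi(D_1)-(D_1,F)$ from Corollary~\ref{cor:m0} against the $(\#\cA+g+h-1)t^1$-type term on the spectrum side -- and it would be cleaner to spell out that check once rather than defer to an unwritten argument.
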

The covering $cov$ considered above, associated with $-Z_K$, is called the {\it topological canonical
covering} of $(X,o)$ (cf. \cite[6.1.19]{Nkonyv}). (Note that the coverings associated with $[l']$ and
$[-l']$ agree, cf. \cite[Remark 4.1.36]{Nkonyv}.)

\subsection{The construction of ${\mathcal M}$ when  $(X,o)$ is $\Q$--Gorenstein}\label{ss:MRHS2}

 \bekezdes\label{bek:qg} A Weil divisor of $(X,o)$ is called $\Q$--Cartier if its class in ${\rm Cl}(X,o)$ (the local divisor class group of $(X,o)$) has finite order. Its order is called its index.
 If $D$ is a $\Q$--Cartier divisor of index $N$ then it
 determines a $o$--ramified Galois  $\Z_N$--covering $cov:(X_D,o)\to (X,o)$, here $(X_D,o)$ is a normal surface singularity. The covering depends only on the class of $D$ in ${\rm Cl}(X,o)$.  For details see
 \cite[2.49]{km}, \cite{okuma,tomw} or \cite[Proposition 6.1.10]{Nkonyv}.

 Write $\Omega^2_{\tX}=\cO_{\tX}(K_{\tx})$ and set $K_{X}=\phi_*(K_{\tX})$, well--defined in
 ${\rm Cl}(X,o)$. We say that $(X,o)$ is Gorenstein (respectively  $\Q$--Gorenstein)
 if $K_X=0$ (respectively  $K_X$ has finite order in ${\rm Cl}(X,o)$).
 If $(X,o)$ is $\Q$--Gorenstein then the associated cyclic covering is called the `(analytic) canonical covering'.

Assume that the link is a rational homology sphere and  $(X,o)$ is $\Q$--Gorenstein. Then
the coverings $cov:(X_{[l']},o)\to (X,o)$  and  $cov:(X_{K_X},o)\to (X,o)$ agree
\cite[page 179]{Nkonyv}. Thus,
 if we replace the covering $cov:(X_{[l']},o)\to (X,o)$
 from the previous subsection with the covering  $cov:(X_{K_X},o)\to (X,o)$, in a totally similar way we can define
 (via $cov_*\C_{X_{K_X}\setminus \{o\}}$ and  ${\mathbb V}^1$)
  a variation of Hodge structure ${\mathbb V}_{K_X}$ on $X\setminus \{o\}$
  (which agrees with ${\mathbb V}_{[Z_K]}$, and it has the same
  underlying monodromy representation
  and the same $\beta$--coefficients). Hence, if we take ${\mathcal M}_{K_X}:=j_*{\mathbb V}_{K_X}$ then by \cite{NS} and Theorem \ref{th:main_mult}
    \begin{equation}\label{eq:ext3}
       \sum_{c\in [0,1]}m^*(c)t^c=-\mathfrak{h}^0t^0+ \sum_{c\in [0,1]}m(c)t^c=TSp_{[0,1]}^{\beta}=
       Sp_{[0,1]}(f,{\mathcal  M}_{K_X}, t).
\end{equation}

\begin{remark}
Note that the jumping multiplicity/spectral number correspondence
of Corollary  \ref{cor:NS2} is realized for a character (or $\beta$) associated with the cycle $-Z_K$. The general construction of ${\rm TSp}^\beta$, valid for any $\beta$, and the general Theorem \ref{th:NS2}, valid for any character, suggest that there exists a more general correspondence, where one has to generalize the multiplier ideals as well (by replacing the cycle $Z_K$ by a more general cycle). We will return back to such generalizations in a forthcoming manuscript.
\end{remark}

\end{document}